\documentclass[12pt]{amsart}
\usepackage{amssymb}
\usepackage{amsmath}
\oddsidemargin=-.0cm
\evensidemargin=-.0cm
\textwidth=16cm
\textheight=22cm
\topmargin=0cm



\def\D {{\mathcal D}}

\def \and{\qquad\text{and}\qquad}

\newcommand{\Id}{{1\hspace{-4pt} 1}}

\newtheorem{proposition}{Proposition}[section]
\newtheorem{theorem}[proposition]{Theorem}
\newtheorem{corollary}[proposition]{Corollary}
\newtheorem{lemma}[proposition]{Lemma}
\theoremstyle{definition}
\newtheorem{definition}[proposition]{Definition}
\theoremstyle{remark}
\newtheorem{remark}[proposition]{Remark}

\numberwithin{equation}{section}

\begin{document}

\title[On a rigidity property of perturbations of circle bundles on $3$-manifolds]{On a rigidity property of perturbations of circle bundles on $3$-manifolds}

\author{Massimo Villarini}
\footnote[1]{Massimo Villarini, Dipartimento di Scienze Fisiche, Informatiche e Matematiche, via Campi 213/b 41100, Universit\'a di Modena e Reggio Emilia, Modena, Italy

E-mail: massimo.villarini@unimore.it}

\begin{abstract}
Let $\mathcal{F}_0$ be a foliation  whose leaves are the fibers of a smooth circle bundle $\xi_0$ and let $P$, total space of $\xi_0$, be a closed oriented $3$-manifold. Let  $\varepsilon \rightarrow \mathcal{F}_{\varepsilon}$ be a smooth deformation of $\mathcal{F}_0$, each $\mathcal{F}_{\varepsilon}$ being a foliation by circles of $P$. We prove that if there exists a smooth curve $\varepsilon \rightarrow \gamma_{\varepsilon}$, $ \gamma_{\varepsilon}$ leaf of $\mathcal{F}_{\varepsilon}$, converging to a leaf $\gamma_0$ of $\mathcal{F}_0$ when $\varepsilon \rightarrow 0$, then the leaves of $\mathcal{F}_{\varepsilon}$ are the fibers of a circle bundle $\xi_{\varepsilon}$, and there exists a smooth $1$-parameter family of bundle isomorphisms $\varepsilon \rightarrow \varphi_{\varepsilon}$, such that $\varphi_0 = identity$ and $\varphi_{\varepsilon}^{-1}  (\xi_0) = \xi_{\varepsilon}$. In particular, this rigidity property of deformations of a circle bundle on $3$-manifolds always holds true for real analytic families $\varepsilon \rightarrow \mathcal{F}_{\varepsilon}$ if the base space of $\xi_0$ is not a torus. These results stem from the following principle: a foliation by circles $\mathcal{F}$ of $P$ which is sufficiently $C^1$-close and tangent to $\mathcal{F}_0$ along a fiber $\gamma_0$, {\it i.e.} $\mathcal{F}$ and $\mathcal{F}_0$ have $\gamma_0$ as a common leaf, is conjugated to ${\mathcal{F}_0}$ through a bundle isomorphism.
\end{abstract}

\maketitle

\section{Introduction}
Let
$$
S^1 = \{ z \in \mathbb{C} : \vert z \vert =1 \}
$$
and let
\begin{equation}\label{bundle}
\xi_0 \, : \, S^1 \hookrightarrow P \rightarrow^{\pi} M
\end{equation}
be a smooth (or real analytic) circle bundle. Throughout this article all mathematical objects will be smooth, {\it i.e.} infinitely differentiable, or real analytic: the results concerning the smooth case remain true if smoothness is understood as $C^k$-regularity, $k\geq 2$, for the hypotheses, and as $C^{k-1}$-regularity for the theses in the statements. Let
\begin{equation}\label{generatore}
X_0 : P \rightarrow TP
\end{equation}
be the fundamental vector field of $\xi_0$, {\it cfr.} \cite{kn} $\S 5$, defined by the differential of the $S^1$-action acting on the vector field $\frac{\partial}{\partial \theta}$ generating the Lie algebra of $S^1$, $z=e^{i \theta} \in S^1$. We will call $X_0$ the {\it isochronous infinitesimal generator of} $\xi_0$: its closed orbits are the fibers of $\xi_0$, and they all have minimal period $2 \pi$. $X_0$ completely describes $\mathcal{F}_0$, the foliation by circles whose leaves are the fibers of $\xi_0$, and our approach to the subject of perturbations of circle bundles on $3$-manifolds will be through the study of the class of vector fields satisfying the following definition

\begin{definition}\label{perturbazione}
Let $\overline \varepsilon >0$ and
$$
X: P \times ]-\overline \varepsilon , \overline \varepsilon[ \rightarrow T(P\times \mathbb{R})
$$
such that
\begin{equation}\label{campo}
\begin{split}
1) &X(\cdot , \varepsilon)  =  X_{\varepsilon}(\cdot) + \frac{\partial}{\partial \varepsilon}   \\
2) \mathcal{F}_{\varepsilon} & \, is \, a \, foliation \, by \, circles \, defined \, by \, X_{\varepsilon} : P \rightarrow TP .
\end{split}
\end{equation}
We will refer to $\varepsilon \rightarrow X_{\varepsilon}$ as a smooth deformation of $X_0$, and to $\varepsilon \rightarrow \mathcal{F}_{\varepsilon}$ as a smooth deformation of $\mathcal{F}_0$.
\end{definition}
The original motivations of this article were two questions, $(Q1)$ and $(Q2)$ below, concerning two basic results on foliations by circles, namely Seifert's Stability Theorem, and a result by D.B.A. Epstein characterizing foliations by circles on $3$-manifolds. Questions $(Q1)$, $(Q2)$ are related to a dynamical problem we will briefly mention at the end of this introduction.
\begin{theorem}\label{seifert}(Seifert's Stability Theorem \cite{seifert})
Let $\varepsilon \rightarrow X_{\varepsilon}$ be a smooth deformation of the isochronous infinitesimal generator $X_0$ of (\ref{bundle}) (here we do not assume that the integral curves of the $X_{\varepsilon}$'s, $\varepsilon \neq 0$, are closed, {i.e.} we do not assume $2)$ in Definition (\ref{perturbazione})). If $P$ is closed and the Euler characteristic $\chi (M) \neq 0$ there exists $\overline \varepsilon >0$ such that for every $\varepsilon$ satifying $\vert \varepsilon \vert < \overline \varepsilon$, there exists a closed orbit $\gamma_{\varepsilon}$ of $X_{\varepsilon}$ whose minimal period $T(\varepsilon)$ satisfies
$$
T(\varepsilon)=2 \pi + o(1)
$$
\end{theorem}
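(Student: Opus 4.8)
The plan is to convert the existence of a closed orbit of $X_\varepsilon$ of period near $2\pi$ into a zero--finding problem on $M$, to which the hypothesis $\chi(M)\neq0$ applies via Poincar\'e--Hopf. Since every orbit of $X_0$ is closed of minimal period $2\pi$, its flow satisfies $\phi^0_{2\pi}=\mathrm{id}_P$, and $C^1$--closeness of $X_\varepsilon$ to $X_0$ for $|\varepsilon|<\overline\varepsilon$ gives $\phi^\varepsilon_t\to\phi^0_t$ in $C^1$, uniformly for $t\in[0,4\pi]$; in particular $\phi^\varepsilon_{2\pi}$ is $C^1$--close to $\mathrm{id}_P$. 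Cover $M$ by finitely many open sets $U_i$ over which $\xi_0$ trivializes, so that on $\pi^{-1}(U_i)\cong U_i\times S^1$ one has $X_0=\partial_\theta$. After dividing $X_\varepsilon$ by its (positive, close to $1$) $\partial_\theta$--coefficient -- which does not alter its orbits -- one may write $X_\varepsilon=\partial_\theta+Y_{i,\varepsilon}$ with $Y_{i,\varepsilon}$ having no $\partial_\theta$--component and of size $O(\varepsilon)$. The ``once--around'' Poincar\'e return map $\Pi^i_\varepsilon\colon U'_i\to U_i$ to the local section $\{\theta=\theta_0\}$ is then defined for small $\varepsilon$, is $C^1$--close to $\mathrm{id}_{U_i}$, with $\Pi^i_0=\mathrm{id}_{U_i}$, and the corresponding return time of the original field $X_\varepsilon$ is $2\pi+O(\varepsilon)$. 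A fixed point of $\Pi^i_\varepsilon$ is precisely a closed orbit of $X_\varepsilon$ of winding number one about the fibre, and -- being $C^1$--close to an embedded fibre -- it has minimal period $2\pi+o(1)$; so it suffices to produce such a fixed point for every small $\varepsilon$.

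The next step is the first--order reduction. Expanding the orbit through $(x,\theta_0)$ in $\varepsilon$,
\[
\Pi^i_\varepsilon(x)-x=\varepsilon\,W_i(x)+o(\varepsilon),\qquad
W_i(x)=\int_0^{2\pi} d\pi\big(\partial_\varepsilon Y_{i,\varepsilon}|_{\varepsilon=0}\big)(x,\theta)\,d\theta,
\]
i.e.\ $W_i$ is the average around the fibre of the first--order horizontal drift of the deformation. The crucial technical point is that the locally defined fields $W_i$ patch to a single vector field $W$ on $M$: the structure group of $\xi_0$ acts on the fibres by rotations $\theta\mapsto\theta+c_{ij}(x)$, the average over the full fibre is invariant under such rotations, and the term produced by the $x$--dependence of $c_{ij}$ under a change of trivialization is purely vertical, hence invisible to the horizontal average. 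Carrying this through, together with the accompanying estimate that the discrepancy between $\Pi^i_\varepsilon$ and $\Pi^j_\varepsilon$ over $U_i\cap U_j$ is $O(\varepsilon^2)$, is the real content of the construction.

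For the topological conclusion, $W$ lives on the closed manifold $M$ with $\chi(M)\neq0$; by Poincar\'e--Hopf $W$ cannot be nowhere zero, so it has a zero $x_0$, and to leading order in $\varepsilon$ the return map $\Pi^{i_0}_\varepsilon$ (in a chart $U_{i_0}\ni x_0$) displaces points by $\varepsilon\big(W+o(1)\big)$, which by a degree argument around $x_0$ should still have a fixed point $x_\varepsilon\to x_0$, yielding $\gamma_\varepsilon$ with $T(\varepsilon)=2\pi+o(1)$. I expect the genuine obstacle to be exactly the rigorous form of this last step: the displacements of the various $\Pi^i_\varepsilon$ do not literally glue over $M$ (they agree only to first order, with $O(\varepsilon^2)$ discrepancies, and the base point of a near--periodic orbit drifts by $O(\varepsilon)$), and moreover $W$ may have non--isolated or index--zero zeros, or vanish identically -- as it does when $X_\varepsilon$ still generates a circle action. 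The clean way around all of this is to replace the bare Poincar\'e--Hopf count by the Fuller index of the set of periodic orbits: at $\varepsilon=0$ this set is all of $M$, each fibre counted with multiplicity one, of Fuller index $\chi(M)$; invariance of this index under the deformation $\varepsilon\mapsto\mathcal{F}_\varepsilon$ keeps it equal to $\chi(M)\neq0$ for $|\varepsilon|$ small, which forces a closed orbit of period $2\pi+o(1)$. Setting up this index and proving its invariance -- equivalently, producing the right finite--dimensional reduction of the return maps to which an Euler--characteristic argument genuinely applies even when the first--order term degenerates -- is where the real work of the proof is concentrated.
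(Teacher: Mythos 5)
The paper does not prove this statement: Theorem (\ref{seifert}) is quoted as a classical result with a reference to Seifert's 1950 paper, so there is no in-paper argument to compare yours against. Judged on its own, your proposal correctly sets up the standard reduction (normalize the $\partial_\theta$-component, pass to local once-around return maps $\Pi^i_\varepsilon$ that are $C^1$-close to the identity, extract the fibre-averaged first-order drift $W$ on $M$, and invoke $\chi(M)\neq 0$ via Poincar\'e--Hopf), and the patching claim for the $W_i$ is essentially right: the average over the fibre is unaffected by the rotations $\theta\mapsto\theta+c_{ij}(x)$, so $W$ is a genuine vector field on $M$.

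The genuine gap is the one you yourself flag, and it is not a technicality that can be waved at: the Poincar\'e--Hopf argument applied to $W$ proves nothing when $W$ vanishes identically or has only degenerate zeros, and this case actually occurs (e.g.\ when the perturbation is trivial to first order, or when each $X_\varepsilon$ still generates a circle action, so that \emph{every} point is a fixed point of the return map and $W\equiv 0$). In that regime the conclusion must come from a fixed-point index of the return map itself --- equivalently a Fuller-type index of the whole set of closed orbits --- whose normalization at $\varepsilon=0$ equals $\chi(M)$ and whose homotopy invariance under the deformation is exactly the content of Seifert's theorem. You name this device but do not construct the index, prove its invariance, or explain how the $O(\varepsilon^2)$ discrepancies between the charts are absorbed into it (a zero of a partition-of-unity gluing of the local displacements is not automatically a fixed point of any actual $\Pi^i_\varepsilon$). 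Since the entire difficulty of the theorem is concentrated in precisely that step, the proposal is a correct road map rather than a proof.
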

We will refer to the closed curves $\gamma_{\varepsilon}$ in the statement of the above theorem as {\it Seifert's leaves}

\begin{theorem}\label{epstein}(Epstein's Theorem \cite{epstein})
Any smooth foliation by circles generated by a $\mathbb{R}$-action on a closed $3$-manifold $P$ is diffeomorphically conjugated to a foliation generated by a $S^1$-action on $P$
\end{theorem}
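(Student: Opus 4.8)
\medskip
\noindent\emph{Sketch of proof.} This is a deep theorem of Epstein; here is the line of argument I would follow. Write $\phi:\R\times P\to P$ for the flow and let $p:P\to(0,\infty)$ be the period function, $p(x)$ the minimal period of the orbit of $x$; note that $p$ is lower semicontinuous and, since $P$ is compact and $\phi$ has no fixed point (all orbits being circles), $p$ is bounded below. The plan is to prove that $p$ is \emph{locally bounded} on $P$, and then to read off from the resulting local structure of $\phi$ a Seifert fibration of $P$ --- equivalently, a smooth effective $S^1$-action with finite isotropy groups --- whose orbit foliation is the given one.

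First I would analyse a single leaf $\gamma$, of minimal period $\tau$, under the assumption that $p$ is bounded, say by $N$, near $\gamma$. Picking $x_0\in\gamma$ and a small embedded $2$-disk $D\subset P$ meeting $\gamma$ transversally at $x_0$, with Poincar\'e first-return map $h:(D,x_0)\to(D,x_0)$ and first-return time $\rho:D\to(0,\infty)$ (continuous, $\rho(x_0)=\tau$), one sees that for $x$ near $x_0$ the orbit of $x$ meets $D$ in the points $x,h(x),\dots,h^{m(x)-1}(x)$, where $m(x)\ge1$ is least with $h^{m(x)}(x)=x$, and has minimal period $\sum_{j=0}^{m(x)-1}\rho(h^j(x))\ge m(x)\inf_D\rho$. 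The bound $p\le N$ then forces $m(x)$ to be bounded, hence $h^n=\mathrm{id}$ near $x_0$ with $n=\lfloor N/\inf_D\rho\rfloor!$, so the germ of $h$ at $x_0$ generates a finite cyclic group; by Bochner's linearization theorem (average an invariant metric near $x_0$) this germ is smoothly conjugate to a finite-order linear map of $\R^2$, i.e.\ (for $P$ oriented, so that the flow direction orients $D$) a rotation by $2\pi a/q$ with $\gcd(a,q)=1$. This is precisely the standard fibred model near an ordinary ($q=1$) or exceptional (multiplicity $q$) Seifert fibre. So, once $p$ is known to be locally bounded on all of $P$, these models cover $P$, and a slice argument shows that they patch to a smooth Seifert fibration $\pi:P\to B$ over a closed $2$-orbifold with the leaves as fibres; since a Seifert fibration of a compact oriented $3$-manifold is the orbit foliation of an effective smooth $S^1$-action, that action generates the given foliation, up to the change of coordinates on the transverse disks supplied by Bochner's theorem --- this being the diffeomorphic conjugation of the statement.

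The hard part will be establishing that $p$ is locally bounded, and it is here that $\dim P=3$ must enter (the analogue fails in higher dimensions, by Sullivan's example). I would argue by contradiction on the closed $\phi$-invariant ``bad set'' $B\subset P$ of points where $p$ is not locally bounded, running a transfinite descent through closed invariant subsets: $E_0=P$, $E_{\alpha+1}=\{x\in E_\alpha : p|_{E_\alpha}\text{ is not locally bounded at }x\}$, and $E_\lambda=\bigcap_{\alpha<\lambda}E_\alpha$ at limit ordinals, the sequence stabilising at some $E_\infty$ because $P$ is second countable. Writing $E_\alpha=\bigcup_{N\in\N}\{x\in E_\alpha:p(x)\le N\}$ as a countable union of sets closed in $E_\alpha$ (lower semicontinuity) and applying the Baire category theorem shows that $p|_{E_\alpha}$ is locally bounded somewhere on $E_\alpha$ whenever $E_\alpha\ne\emptyset$, whence $E_{\alpha+1}\subsetneq E_\alpha$ and $E_\infty=\emptyset$. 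The genuinely $3$-dimensional, and genuinely delicate, input --- which I expect to be the main obstacle --- is to propagate boundedness from each stratum $E_\alpha\setminus E_{\alpha+1}$ to a full neighbourhood of it in $P$: one must understand exactly how the long orbits of $P$ accumulate onto the bad set, the key fact being that the disks transverse to the orbits are only $2$-dimensional, which leaves no room for the period to blow up. Granting this propagation, $B=\emptyset$, $p$ is locally bounded, and the previous paragraph finishes the proof; the transfinite and Baire-category bookkeeping around it is routine.
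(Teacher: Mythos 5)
The paper does not actually prove this statement: it is quoted from Epstein's 1972 Annals paper \cite{epstein} and used as a black box, so there is no in-paper proof to compare yours against. Judged on its own terms, your sketch correctly reproduces the architecture of Epstein's argument --- lower semicontinuity of the minimal period function $p$, the reduction of the theorem to local boundedness of $p$, the Poincar\'e-return-map and Bochner linearization analysis giving the standard Seifert models wherever $p$ is locally bounded, and the transfinite descent through the sets $E_\alpha$ combined with Baire category to force $E_{\alpha+1}\subsetneq E_\alpha$.

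However, there is a genuine gap, and you have located it yourself: the step you ``grant'' --- propagating boundedness of $p$ from the last nonempty stratum $E_\alpha\setminus E_{\alpha+1}$ to a full neighbourhood in $P$, so that the bad set is in fact empty --- is the entire mathematical content of the theorem. Everything before and after it is routine, and this is precisely where $\dim P=3$ enters irreplaceably: the Sullivan--Thurston examples discussed in Section 3 of the paper show the statement fails for $\dim P\geq 4$ exactly because this propagation fails there. Epstein's argument at this point is not a soft compactness or dimension count. One must take a compact invariant piece $K$ of the bad set on which $p$ is bounded (by Baire), build a controlled system of two-dimensional transversals around $K$, and show that a nearby orbit which winds many times around $K$ before closing up forces, through an analysis of rotation and linking numbers of the return map on the transversal, a uniform bound on the number of windings and hence on the period --- contradicting the assumption that points of $K$ lie in the bad set. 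Saying that ``the disks are only $2$-dimensional, which leaves no room for the period to blow up'' names the phenomenon but does not prove it. As written, your argument establishes the comparatively easy implication (locally bounded period implies Seifert fibration implies $S^1$-action) and leaves the theorem itself unproved.
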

It seems natural to ask:

$(Q1)$: when a family $\varepsilon \rightarrow \gamma_{\varepsilon}$ of Seifert's leaves is smooth (see definition below) and converges to a fiber $\gamma_0$ of $\xi_0$? Does the existence of this smooth family of Seifert's leaves simplify the dynamics of the perturbation in (\ref{campo}), which in general could be rather complicated, see {\it e.g.} \cite{sullivan}?

$(Q2)$: if in Epstein's Theorem we consider a $1$-parameter family of foliations by circles as in Definition (\ref{perturbazione}), do exsist diffeomorphisms between $\mathcal{F}_{\varepsilon}$ and $\mathcal{F}_0$ which form a smooth $1$-parameter family? are they bundles isomorphisms?

These two questions turn to be closely related and their investigation leads to the rigidity phenomenon referred to in the title of the article, {\it cfr.} Theorem (\ref{main}).

\begin{definition}\label{liscia}
Let $\varepsilon \rightarrow \mathcal{F}_{\varepsilon}$ be as in Definition (\ref{perturbazione}) and let $\varepsilon \rightarrow \gamma_{\varepsilon}$ be a $1$-parameter family of Seifert's leaves. Then $\varepsilon \rightarrow \gamma_{\varepsilon}$ is smooth and converges to $\gamma_0$, leaf of $\mathcal{F}_0$, if:
\begin{itemize}
\item [.] there exists a smooth curve $\varepsilon \rightarrow p_{\varepsilon}$ in $P$, $p_{\varepsilon} \in \gamma_{\varepsilon}$, such that $p_{\varepsilon} \rightarrow p_0$ as $ \varepsilon \rightarrow 0$, $p_0 \in \gamma_0$ \\
\item [.] denoting $(t, p, \varepsilon)  \rightarrow \phi^t_{\varepsilon} (p)$ the flows of $X_{\varepsilon}$, $\phi^t_{\varepsilon} (p_{\varepsilon}) \rightarrow \phi^t_0 (p_0)$ as $\varepsilon \rightarrow 0$, $t \in (-2 \pi , 2 \pi)$.
\end{itemize}
\end{definition}
\begin{remark} We explicitly observe that, from a straightforward application of the Implicit Function Theorem, if $\varepsilon \rightarrow \gamma_{\varepsilon}$ is a smooth curve of Seifert's leaves then as solutions of $X_{\varepsilon}$ each $\gamma_{\varepsilon}$ has minimal period $T(\varepsilon)= 2 \pi + o(1)$, and $\varepsilon \rightarrow T(\varepsilon)$  is smooth. Therefore:

if $\varepsilon \rightarrow \gamma_{\varepsilon}$ is a smooth curve of Seifert's leaves, up to smooth reparametrization of the vector fields $X_{\varepsilon}$, we can suppose that
$$
T(\varepsilon) \equiv 2 \pi.
$$
Basically we will always suppose this condition to hold in all the cases when smooth curves of Seifert's leaves appear in this article.
\end{remark}

The main result to be proved is
\begin{theorem}\label{main}
Let $\varepsilon \rightarrow \mathcal{F}_{\varepsilon}$ as in Definition (\ref{perturbazione}), and let $\varepsilon \rightarrow \gamma_{\varepsilon}$ be smooth in the sense of the previous definition. In (\ref{bundle}) let $dim P =3$, $P$, $M$ both closed and oriented. Then there exists a smooth $1$-parameter family of diffeomorphisms
$$
\varphi_{\varepsilon} : P \rightarrow P
$$
such that
\begin{itemize}
\item [(i)] $\varphi_0 = identity$ \\
\item [(ii)] the leaves of each $\mathcal{F}_{\varepsilon}$ are the fibers of a circle bundle 
$$
\xi_{\varepsilon} : S^1 \hookrightarrow P \rightarrow^{\pi} M
$$
and $\varphi_{\varepsilon}$ is a bundle isomorphism between $\xi_{\varepsilon}$ and $\xi_0$.
\end{itemize}
\end{theorem}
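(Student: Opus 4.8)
The plan is to reduce Theorem \ref{main} to the ``principle'' stated in the abstract: a foliation by circles $\mathcal{F}$ that is $C^1$-close to $\mathcal{F}_0$ and shares a common leaf $\gamma_0$ is conjugate to $\mathcal{F}_0$ by a bundle isomorphism. First I would set things up so that the common-leaf hypothesis is available: using the smooth family of Seifert's leaves $\gamma_\varepsilon$ and the remark normalizing $T(\varepsilon)\equiv 2\pi$, I would apply a preliminary smooth $1$-parameter family of diffeomorphisms $\psi_\varepsilon$ of $P$, with $\psi_0=\mathrm{id}$, carrying $\gamma_\varepsilon$ onto the fixed fiber $\gamma_0$ and matching the flow of $X_\varepsilon$ along $\gamma_\varepsilon$ with the flow of $X_0$ (this is a tubular-neighborhood / isotopy extension argument, using that $\varepsilon\mapsto p_\varepsilon$ and $\varepsilon\mapsto\phi^t_\varepsilon(p_\varepsilon)$ are smooth). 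After this reduction the pushed-forward foliations $\tilde{\mathcal{F}}_\varepsilon$ all contain $\gamma_0$ as a leaf, are generated by vector fields $\tilde X_\varepsilon$ with $\tilde X_0=X_0$, and for $|\varepsilon|$ small are $C^1$-close to $\mathcal{F}_0$; so it suffices to establish the principle and to check that the conjugating bundle isomorphism it produces can be chosen to depend smoothly on $\varepsilon$ and to equal the identity at $\varepsilon=0$.

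The core of the proof of the principle is a Seifert-fibered-space / holonomy argument in dimension $3$. Since $\mathcal{F}$ is a foliation by circles of the closed oriented $3$-manifold $P$, by Epstein's Theorem (Theorem \ref{epstein}) — once one knows $\mathcal{F}$ comes from an $\mathbb{R}$-action, which it does because every leaf is closed with period near $2\pi$, giving a genuine reparametrized flow with all orbits circular — $\mathcal{F}$ is the fibration of a Seifert fibered structure, hence a circle bundle if there are no exceptional fibers. The $C^1$-closeness to the honest circle bundle $\xi_0$ rules out exceptional fibers: the return map on a small transverse disc to $\gamma_0$ is $C^1$-close to the identity, so it is a periodic diffeomorphism of the disc with small period multiplier, forcing all nearby leaves to have the same order, i.e. multiplicity one; propagating along $M$ (connected) via the fibration, every fiber is regular. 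Thus $\mathcal{F}$ defines a circle bundle $\xi$ over $M$. To produce the bundle isomorphism I would compare $\xi$ with $\xi_0$: both are $S^1$-bundles over the same closed oriented surface $M$, and I would argue the Euler classes agree — either because the $C^1$-closeness along $\gamma_0$ fixes the local orientation data and the continuity in $\varepsilon$ forbids the discrete Euler number from jumping, or by an explicit section/obstruction count over a CW-structure of $M$. Equal Euler class over a surface implies $\xi\cong\xi_0$ as oriented $S^1$-bundles, and one can choose the isomorphism to restrict to $\gamma_0$ as a prescribed rotation, in particular matching parametrizations there; chasing the construction with $\varepsilon$ as a parameter yields the smooth family $\varphi_\varepsilon$ with $\varphi_0=\mathrm{id}$.

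Concretely the steps are: (1) normalize $T(\varepsilon)\equiv 2\pi$ and reparametrize $X_\varepsilon$; (2) build $\psi_\varepsilon$ straightening $\gamma_\varepsilon$ to $\gamma_0$ and matching the flow along it, reducing to the common-leaf situation with $C^1$-smallness; (3) invoke Epstein to get a Seifert structure on $P$ for each small $\varepsilon$; (4) use $C^1$-closeness of the holonomy (return map on a transverse disc) to $\gamma_0$ to exclude exceptional fibers, upgrading the Seifert structure to a circle bundle $\xi_\varepsilon$; (5) identify $\xi_\varepsilon$ with $\xi_0$ by an Euler-class (obstruction-theory) argument over the closed oriented surface $M$, using continuity in $\varepsilon$ to fix the integer invariant and the boundary condition along $\gamma_0$; (6) assemble the local bundle isomorphisms into a smooth $1$-parameter family $\varphi_\varepsilon$, compose with the $\psi_\varepsilon$ of step (2), and verify $\varphi_0=\mathrm{id}$.

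The main obstacle I expect is step (2) combined with the smoothness-in-$\varepsilon$ bookkeeping in steps (4)--(6): straightening the moving leaf $\gamma_\varepsilon$ while simultaneously making the conjugating diffeomorphism agree with the flow of $X_0$ along it, and then arguing that the bundle isomorphism furnished by the (a priori non-canonical) classification of $S^1$-bundles can be selected to vary smoothly with $\varepsilon$ and to be the identity at $\varepsilon=0$. Epstein's Theorem and the Seifert classification are not obviously ``parametrized'' statements, so the smooth dependence has to be extracted by hand — presumably by going back to the holonomy description near $\gamma_0$, where everything is an honest smooth function of $(\text{transverse coordinate},\varepsilon)$, and by a partition-of-unity / local-triviality patching over $M$ that is transparently smooth in $\varepsilon$. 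The rigidity (the statement that $\varphi_\varepsilon^{-1}(\xi_0)=\xi_\varepsilon$, not merely $\xi_\varepsilon\cong\xi_0$) is then a consequence of having tracked the conjugacy starting from $\varphi_0=\mathrm{id}$ through a connected parameter interval on which the bundle type cannot change.
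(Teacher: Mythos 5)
Your steps (1)--(2) coincide with the paper's reduction (its Lemma 2.1 straightens $\gamma_{\varepsilon}$ onto $\gamma_0$ by the time-one map of an explicit cut-off vector field), and your overall plan --- reduce to the tangent case and prove the ``principle'' --- is exactly the reduction of Theorem (\ref{main}) to Theorem (\ref{parallele}). But your proof of the principle (Epstein $\Rightarrow$ Seifert structure $\Rightarrow$ no exceptional fibers $\Rightarrow$ equal Euler class $\Rightarrow$ isomorphic bundles) is not the paper's route and, as written, does not close. Two concrete gaps. First, ``propagating along $M$ via the fibration, every fiber is regular'' is not an argument: exceptional fibers of a Seifert fibration are isolated, and regularity does not propagate by connectedness. (This gap is repairable --- by uniform $C^0$-closeness of the flows on the compact $P$ no $X_{\varepsilon}$-orbit can have minimal period much below $2\pi$, while a multiplicity-$m$ fiber would force nearby regular fibers to have period near $2\pi m$, so an open-and-closed argument works --- but you must actually make it.) Second, and fatally for the statement as posed, the entire content of the theorem is the existence of a \emph{smooth family} $\varphi_{\varepsilon}$ with $\varphi_0=\mathrm{id}$: ``equal Euler class over a surface implies isomorphic'' produces a non-canonical isomorphism for each frozen $\varepsilon$, with no control on $\varepsilon$-dependence, and neither Epstein's theorem nor the Seifert/Euler-class classification is a parametrized statement. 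You correctly flag this as the main obstacle and then defer it to a ``presumably \dots transparently smooth'' patching that is never constructed; that deferred step is where all the work of the paper lies. (You also tacitly identify the orbit space of $\mathcal{F}_{\varepsilon}$ with $M$, which itself needs an argument.)

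The missing idea is the paper's central device: blow up $P$ along $\gamma_0$. Tangency forces all the lifted fields $\tilde X_{\varepsilon}$ to restrict to one and the same ($\varepsilon$-independent) field on the divisor $\mathcal{E}$ (a Klein bottle foliated in type $(2E,1)$ curves), and the section $s:U_1=M\setminus \mathbb{D}_{r/2}(q_0)\to P$ furnished by the handle-decomposition argument of Lemma (\ref{localizzazione}) has strict transform $\hat\Sigma$ which extends to a closed surface transverse to $\mathcal{E}$ and, by openness of transversality plus compactness, transverse to \emph{every} $\tilde X_{\varepsilon}$ for $|\varepsilon|$ small (Proposition (\ref{risolvi})). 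This single $\varepsilon$-independent global section does everything at once: the return maps $\mathcal{P}_{\varepsilon}:\hat\Sigma\to\hat\Sigma$ are pointwise periodic, hence periodic of a uniform period by Montgomery's theorem, and that period is $2E$ because it is $2E$ on the divisor --- which simultaneously rules out exceptional fibers, bounds all periods, and lets one normalize $T_{\varepsilon}\equiv 2\pi$ smoothly. The bundle isomorphism is then written down explicitly: flow $p$ with $X_{\varepsilon}$ to the section and back out with $X_0$ for the same (implicit-function-theorem-smooth) hitting time $\tau(\varepsilon,p)$, so that $\varphi_{\varepsilon}=\phi_0^{\tau(\varepsilon,\cdot)}\circ(\text{projection to }\Sigma)$ is manifestly smooth in $\varepsilon$, equals the identity at $\varepsilon=0$, and intertwines the two $S^1$-actions by the group property of the flows. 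Without some such transversal that works for all $\varepsilon$ simultaneously and records the local data at $\gamma_0$, your steps (5)--(6) cannot be completed.
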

This theorem is false if $P$ has dimension greater than $3$: a counter-example by Thurston is described in \cite{sullivan} and also, in a slightly modified version, in Section $3$ of this article.

The hypothesis of existence of a smooth curve of Seifert's leaves, which is fundamental in the previous theorem, is satified when the perturbation $\varepsilon \rightarrow \mathcal{F}_{\varepsilon}$ is real analytic and the base space is not a torus, hence leading to
\begin{corollary}\label{analitico}
If in the previous theorem we substitute the hypothesis of existence of a smooth curve of Seifert's leaves with

(i) $\varepsilon \rightarrow \mathcal{F}_{\varepsilon}$ is real analytic and the Euler characteristic of the base space of $\xi_0$ satisfies $\chi (M)\neq 0$

the conclusions of the Theorem (\ref{main}) still hold, with $\varepsilon \rightarrow \varphi_{\varepsilon}$ real analytic family of real analytic diffeomorphisms. 
\end{corollary}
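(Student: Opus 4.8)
The plan is to show that hypothesis (i) forces the situation treated in Theorem \ref{main}: namely, that real analyticity together with $\chi(M)\neq 0$ produces a \emph{real-analytic} curve $\varepsilon\to\gamma_\varepsilon$ of Seifert's leaves in the sense of Definition \ref{liscia}, with $T(\varepsilon)\equiv 2\pi$ after reparametrization. Once such a curve is available, one reruns the proof of Theorem \ref{main}, checking at each step that the constructions preserve real analyticity when the input data $\varepsilon\to X_\varepsilon$ and $\varepsilon\to\gamma_\varepsilon$ are real-analytic; this is what upgrades the conclusion to a real-analytic family $\varepsilon\to\varphi_\varepsilon$ of real-analytic diffeomorphisms.

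First I would invoke Theorem \ref{seifert}: since $\chi(M)\neq 0$ there is $\overline\varepsilon>0$ and, for every $\varepsilon$ with $|\varepsilon|<\overline\varepsilon$, a Seifert's leaf $\gamma_\varepsilon$ of $X_\varepsilon$ of minimal period $2\pi+o(1)$. A priori these leaves neither vary continuously with $\varepsilon$ nor accumulate on a single fiber, so I would first use compactness of $P$: picking one such leaf for each $\varepsilon$ and a sequence $\varepsilon_n\to 0$, the parametrized loops $t\mapsto\phi^t_{\varepsilon_n}(p_{\varepsilon_n})$ subconverge, uniformly on $[-2\pi,2\pi]$, to an integral curve of $X_0$, i.e. to a fiber $\gamma_0$ of $\xi_0$ through some $p_0\in\gamma_0$. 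Consequently the set
\[
\mathcal N=\{(p,\varepsilon)\in P\times(-\overline\varepsilon,\overline\varepsilon):\exists\, T\in(\pi,3\pi),\ \phi^T_\varepsilon(p)=p\},
\]
which is subanalytic because $(T,p,\varepsilon)\mapsto\phi^T_\varepsilon(p)$ is real-analytic (analytic dependence of solutions of the analytic vector field $X$ of Definition \ref{perturbazione}) and $\mathcal N$ is the projection, proper on the compact $P$, of an analytic set, satisfies $(p_0,0)\in\overline{\mathcal N\cap\{\varepsilon\neq 0\}}$. The Curve Selection Lemma of {\L}ojasiewicz then provides a real-analytic arc $s\mapsto(p(s),\varepsilon(s))$ in $\mathcal N$ with $(p(0),\varepsilon(0))=(p_0,0)$ and $\varepsilon(s)\neq 0$ for $s\neq 0$, each $p(s)$ lying on a closed orbit $\gamma_{\varepsilon(s)}$ of $X_{\varepsilon(s)}$ close to $\gamma_0$.

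The delicate point, and the step I expect to be the main obstacle, is to convert this arc into a curve genuinely parametrized by $\varepsilon$ on a two-sided neighbourhood of $0$: in general $\varepsilon(s)=as^k+\cdots$ with $a\neq0$, $k\geq1$, and only $k=1$ yields a $C^1$ (let alone real-analytic) dependence $\varepsilon\to\gamma_\varepsilon$ across $\varepsilon=0$. Here I would exploit that, by hypothesis $2)$ of Definition \ref{perturbazione}, \emph{every} $X_\varepsilon$ generates a foliation by circles, so by Epstein's Theorem \ref{epstein} each $\mathcal F_\varepsilon$ is conjugate to a Seifert fibration; hence the first-return map $R_\varepsilon$ to a fixed small transversal $\Sigma_0$ at $p_0$ is, near $p_0$, periodic of some order $m(\varepsilon)$ (or the identity). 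Averaging over the corresponding cyclic group — an operation that preserves real analyticity — linearizes $R_\varepsilon$, and a closed orbit of period $\approx 2\pi$ meeting $\Sigma_0$ near $p_0$ must pass through the (analytically varying) centre of this rotation; this pins $\gamma_\varepsilon$ down as the $X_\varepsilon$-orbit through an analytically chosen $p_\varepsilon$ with $p_\varepsilon\to p_0$, giving the required real-analytic curve of Seifert's leaves. (In the degenerate case where $R_\varepsilon$ is trivial near $p_0$, every nearby fiber of $\mathcal F_\varepsilon$ is already a Seifert's leaf and the conclusion is immediate.)

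Finally, with a real-analytic curve $\varepsilon\to\gamma_\varepsilon$ of Seifert's leaves in hand, I would reread the proof of Theorem \ref{main} and verify that it proceeds entirely through analyticity-preserving operations: integration of the real-analytic vector fields $X_\varepsilon$ and of their variational equations, applications of the real-analytic Implicit Function Theorem, and the explicit straightening of $\mathcal F_\varepsilon$ onto the bundle $\xi_\varepsilon$ via flow-box charts and averaging along the (now analytic) closed leaves. Granting that each of these steps outputs real-analytic objects depending real-analytically on $\varepsilon$ when the inputs do, the diffeomorphisms $\varphi_\varepsilon$ furnished by Theorem \ref{main} form a real-analytic family of real-analytic diffeomorphisms, which is the statement of the Corollary.
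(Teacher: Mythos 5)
Your core strategy coincides with the paper's: invoke Seifert's theorem to guarantee that for every $\varepsilon$ the set of points lying on short closed orbits is nonempty, observe that the union over $\varepsilon$ of these sets is a real analytic set in $P\times\left]-\overline\varepsilon,\overline\varepsilon\right[$ containing $P\times\{0\}$, and extract a real-analytic arc of Seifert's leaves by a curve-selection argument, then feed that arc into Theorem (\ref{main}). The differences are cosmetic: the paper realizes the set of periodic points as $\bigcup_\alpha\{(p,\varepsilon):\mathcal{P}_{\alpha,\varepsilon}(p)=p\}$ using first-return maps to an analytic family of local transversals $D_\alpha$, so it is manifestly a real analytic set and no subanalytic geometry or existential quantifier over the period $T$ is needed; and it cites the Bruhat--Cartan structure theorem for real analytic sets where you cite {\L}ojasiewicz's curve selection lemma -- essentially the same tool. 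Your use of compactness of $P$ to produce the accumulation fiber $\gamma_0$ is implicit in the paper as well.

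Where you and the paper part ways is your third paragraph. You correctly flag that curve selection yields an arc $s\mapsto(p(s),\varepsilon(s))$ rather than a curve parametrized by $\varepsilon$ on a two-sided neighbourhood of $0$, and that $\varepsilon(s)$ may vanish to higher order. The paper does not confront this point at all: it simply takes the Bruhat--Cartan arc as the desired real analytic curve of Seifert's leaves and proceeds. Your proposed repair, however, is not sound as written: the conjugacy furnished by Epstein's theorem and the Bochner linearization are performed for each fixed $\varepsilon$ separately, the order $m(\varepsilon)$ of the return map is an integer that need not be locally constant in $\varepsilon$, and the ``centre of the rotation'' cannot be located by the implicit function theorem at $\varepsilon=0$ because $R_0=\mathrm{id}$ makes the linearization degenerate there -- so analytic dependence of $p_\varepsilon$ on $\varepsilon$ does not follow from this averaging argument. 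In short, you have identified a genuine subtlety that the paper glosses over, but your fix does not close it; to match the paper you should simply present the Bruhat--Cartan/{\L}ojasiewicz step as the source of the analytic curve (as the paper does) or, if you want to address the parametrization honestly, work with the arc parameter $s$ throughout the subsequent application of Theorem (\ref{main}).
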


Theorem (\ref{main}) stem from the following definition and theorem
\begin{definition}\label{tangenza}
Let $\mathcal{F}_0$, $\mathcal{F}$ be two foliations by circles on $P$: they are {\it tangent} at $\gamma_0$ if $\gamma_0$ is a common leaf of $\mathcal{F}_0$ and $\mathcal{F}$. If $\varepsilon \rightarrow \mathcal{F}_{\varepsilon}$ is, as in Definition (\ref{perturbazione}), a smooth deformation of the foliation $\mathcal{F}_0$, whose leaves are the fibers of the circle bundle $\xi_0$ defined in (\ref{bundle}) and $\gamma_0$ is a common leaf of all $\mathcal{F}_{\varepsilon}$, we will say that $\varepsilon \rightarrow \mathcal{F}_{\varepsilon}$ is {\it tangent} to $\mathcal{F}_0$ at $\gamma_0$.
\end{definition}
\begin{theorem}\label{parallele}
Let $\varepsilon \rightarrow \mathcal{F}_{\varepsilon}$ be as in Definition (\ref{perturbazione}) a smooth deformation of the foliation $\mathcal{F}_0$ whose leaves are the fibers of the circle bundle $\xi_0$ defined in (\ref{bundle}), with $P$, $M$ closed oriented manifolds, $dim P=3$. If $\varepsilon \rightarrow \mathcal{F}_{\varepsilon}$ is tangent to $\mathcal{F}_0$ at $\gamma_0$, there exists a smooth family of bundle isomorphisms $\varepsilon \rightarrow \varphi_{\varepsilon}$, isotopic to the identity, which conjugates each $\mathcal{F}_{\varepsilon}$ to $\mathcal{F}_0$.
\end{theorem}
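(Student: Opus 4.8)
The plan is to reduce the $1$-parameter statement to a fixed-$\varepsilon$ statement: for each small $\varepsilon$, construct a bundle isomorphism $\varphi_\varepsilon$ conjugating $\mathcal F_\varepsilon$ to $\mathcal F_0$, in a way that depends smoothly on $\varepsilon$ and gives the identity at $\varepsilon=0$. The heart of the matter is the ``principle'' advertised in the abstract: a foliation by circles $\mathcal F$ that is $C^1$-close to $\mathcal F_0$ and shares the fiber $\gamma_0$ as a common leaf is conjugate to $\mathcal F_0$ through a bundle isomorphism. So I would first isolate and prove that single-foliation statement, and then upgrade it to a smooth family by keeping track of parameter dependence throughout.

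\textbf{Step 1: holonomy along $\gamma_0$ is trivial.} Because $\gamma_0$ is a common leaf of $\mathcal F_\varepsilon$ and $\mathcal F_0$ and the period has been normalized to $2\pi$ (by the Remark), the time-$2\pi$ return map of $X_\varepsilon$ on a small transversal disc $D$ through $p_0\in\gamma_0$ is a local diffeomorphism $h_\varepsilon\colon (D,p_0)\to(D,p_0)$ which is $C^1$-close to the identity (the identity being the holonomy of the fiber $\gamma_0$ of a circle bundle). A circle bundle on an oriented closed surface has vanishing holonomy along fibers in the strong sense that the germ of $h_\varepsilon$ is actually, after shrinking $D$, conjugate to the identity; more to the point, since all nearby leaves of $\mathcal F_\varepsilon$ are closed (Seifert leaves exist here by hypothesis via tangency, but in fact \emph{every} leaf of $\mathcal F_\varepsilon$ is a circle because $\mathcal F_\varepsilon$ is a foliation by circles), the return map $h_\varepsilon$ is periodic: each point of $D$ is fixed by some iterate $h_\varepsilon^{k(\cdot)}$. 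Combined with $C^1$-closeness to the identity this forces $h_\varepsilon=\mathrm{id}$ on a possibly smaller disc $D'$. Hence $\mathcal F_\varepsilon$ restricted to a saturated neighborhood $U$ of $\gamma_0$ is a trivial circle bundle $D'\times S^1$, compatibly with $\mathcal F_0$.

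\textbf{Step 2: globalize via Epstein's theorem and the bad set.} By Epstein's Theorem the leaves of $\mathcal F_\varepsilon$ are the fibers of a circle bundle $\xi_\varepsilon\colon S^1\hookrightarrow P\to M_\varepsilon$ over some closed surface $M_\varepsilon$. Since $M_\varepsilon$ depends continuously on $\varepsilon$ and $M_0=M$, for $\varepsilon$ small $M_\varepsilon$ is diffeomorphic to $M$. The quotient maps $P\to M_\varepsilon$ give a smooth family of foliations; I would pick local sections and patch, using Step 1 near $\gamma_0$ to fix a common reference fiber, and invoke the classification of circle bundles over surfaces by their Euler number: the Euler number of $\xi_\varepsilon$ is an integer depending continuously on $\varepsilon$, hence constant, equal to that of $\xi_0$. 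Thus $\xi_\varepsilon$ and $\xi_0$ are isomorphic as bundles. Producing the isomorphism \emph{smoothly in $\varepsilon$ and equal to the identity at $\varepsilon=0$}: start from the identity at $\varepsilon=0$ and integrate. Concretely, the ``tangency at $\gamma_0$'' means $X_\varepsilon$ and $X_0$ agree on $\gamma_0$; the vector field $Y_\varepsilon:=\tfrac{\partial}{\partial\varepsilon}X_\varepsilon$ is then tangent-correctable, and one solves the homological equation $[X_0,Z_\varepsilon]=$ (projection of $Y_\varepsilon$) to get a time-dependent conjugating vector field $Z_\varepsilon$ on $P$ vanishing along $\gamma_0$, whose flow defines $\varphi_\varepsilon$. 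Averaging over the $S^1$-fibers of $\xi_0$ (or of $\xi_\varepsilon$) makes $Z_\varepsilon$ projectable, so $\varphi_\varepsilon$ is a bundle map; it is isotopic to the identity by construction since $\varphi_0=\mathrm{id}$ and $\varepsilon\mapsto\varphi_\varepsilon$ is a path.

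\textbf{Main obstacle.} The delicate point is Step 1 together with the smooth parameter dependence in Step 2: one must rule out, uniformly in small $\varepsilon$, any ``exceptional'' behavior of $\mathcal F_\varepsilon$ near $\gamma_0$ of the type that makes Epstein's bad set nonempty, and simultaneously ensure the conjugacies do not blow up as one patches them across $M$. The orientability of $P$ and $M$ and the three-dimensionality are used precisely here: in higher dimension the holonomy return map on a transversal of dimension $\geq 2$ need not be trivialized by periodicity plus $C^1$-smallness (this is the source of Thurston's counterexample), whereas a one-dimensional transversal leaves no room. I expect the write-up to spend most of its effort verifying that the germ-level trivialization of Step 1 spreads to a genuine saturated neighborhood and then that the Euler-number argument can be made one-parameter-smooth rel $\gamma_0$; the solvability of the homological equation $[X_0,\,\cdot\,]=\cdot$ on sections over $M$ is standard (Fredholm alternative, using $\chi$-independent cohomological obstructions that vanish because the deformation fixes $\gamma_0$), so that is not where the difficulty lies.
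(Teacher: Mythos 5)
Your strategy (trivialize the holonomy at $\gamma_0$, globalize via Epstein's theorem and the Euler number, then obtain $\varepsilon$-smoothness by a Moser-type homological equation) is genuinely different from the paper's, which instead blows up $P$ along $\gamma_0$, shows that the section $s:U_1\to\pi^{-1}(U_1)$ of Lemma (\ref{localizzazione}) extends across the divisor to a global section $\hat\Sigma$ transverse to every lifted field $\tilde X_\varepsilon$ (Proposition (\ref{risolvi})), applies Montgomery's theorem to the whole family of Poincar\'e maps on $\hat\Sigma$ to get a \emph{uniform} period $2E$, and then writes the bundle isomorphism explicitly in terms of the normalized flows. Measured against that, your Step 1 has a genuine gap: the inference ``pointwise periodic and $C^1$-close to the identity, hence equal to the identity'' is false for a single return map. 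A rigid rotation of the transversal disc by $2\pi/m$ is pointwise periodic and $C^1$-close to the identity for $m$ large, and it is exactly the first-return map of the Seifert-fibred solid torus $\dot w=(i/m)w$, $\dot\varphi=1$, a legitimate foliation by circles, tangent to the product foliation along the core and $C^1$-close to it (Definition (\ref{perturbazione}) places no a priori bound on the periods of the \emph{other} leaves). What kills this scenario is a bound on the order of $h_\varepsilon$ that is uniform in $\varepsilon$, which is precisely what Montgomery's theorem applied to $(p,\varepsilon)\mapsto(\mathcal P_\varepsilon(p),\varepsilon)$ on a connected manifold provides; your write-up never invokes it. (Your closing remark that ``a one-dimensional transversal leaves no room'' is also dimensionally off: in a $3$-manifold the transversal to a foliation by circles is $2$-dimensional.)

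More seriously, Step 2 opens with ``by Epstein's Theorem the leaves of $\mathcal F_\varepsilon$ are the fibers of a circle bundle over $M_\varepsilon$.'' Epstein's theorem only yields an $S^1$-action, i.e.\ a Seifert fibration, which may have exceptional orbits; your Step 1, even once repaired, excludes an exceptional fiber only \emph{at} $\gamma_0$, not elsewhere in $P$. Ruling out exceptional fibers globally — equivalently, showing the orbit space is a genuine surface rather than an orbifold, so that the Euler-number classification applies at all — is the actual content of the theorem, and it is exactly what the paper's global section $\hat\Sigma$ together with the uniform-period argument delivers. Finally, the homological-equation step is asserted rather than proved: the obstruction to solving $[X_0,Z_\varepsilon]=\Pi Y_\varepsilon$ fiberwise is the average of $Y_\varepsilon$ over the fibers of $\xi_0$, and this does not vanish merely because the deformation fixes $\gamma_0$ (the conjugacy must in general cover a nontrivial isotopy of $M$), so ``Fredholm alternative, obstructions vanish'' does not stand as written. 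The paper sidesteps all of this by defining $\varphi_\varepsilon$ directly from the return times to $\hat\Sigma$ and verifying equivariance of the resulting map by the group property of the flows.
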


The proof of this theorem, and those of Theorem (\ref{main}) and Corollary (\ref{analitico}) which easily follows from it, will be given in Section $2$, while in Section $3$ we will discuss the hypotheses on which these results are based. In the rest of this section, firstly we will describe an example where all the main arguments entering in the proof of Theorem (\ref{parallele}) will be easily recognizable, and later we will conclude giving a short account of the dynamical problem which motivates these investigations.

{\bf Example}: {\it perturbations of the Hopf bundle which are tangent to a distinguished fiber}.

Let $\underline x \in \mathbb{R}^4$, let $\Vert \cdot \Vert$ be the Euclidean norm and
\begin{equation}\label{hopf}
\dot {\underline x} = A \underline x = X_0 (\underline x)
\end{equation}
where
\[
A=
\left(
\begin{array}{cccc}
0 & -1 & 0 & 0  \\
1 & 0 & 0 & 0 \\
0 & 0 & 0 & -1 \\
0 & 0 & 1 & 0 
\end{array} \right).
\]
$A=-A^t$ hence $S^3 = \{ \underline x : \Vert \underline x \Vert^2 =1 \}$ is invariant for (\ref{hopf}) therefore
$$
X_0 : S^3 \rightarrow T S^3
$$
is the dynamical system defined by two identical harmonic oscillators constrained to the $1$-energy level.
Writing (\ref{hopf}) as
\[
\begin{cases}
\dot z_1 & = i z_1 \\
\dot z_2 & = i z_2
\end{cases}
\]
$z_1 = x_1 + i x_2$, $z_2 = x_3 + i x_4$, and defining $\pi (z_1 , z_2) =[z_1:z_2] \in \mathbb{CP}$, $X_0$ turns to be the isochronous infinitesimal generator of the Hopf circle bundle
$$
\xi_0 \, : \, S^1 \hookrightarrow S^3 \rightarrow^{\pi} \mathbb{CP}\simeq S^2.
$$
The fibers of this bundle define a foliation by circles $\mathcal{F}_0$ of $S^3$. We will consider a smooth deformation
$$
\varepsilon \rightarrow X_{\varepsilon}
$$
of $X_0$, where each
$$
X_{\varepsilon} : S^3 \rightarrow T S^3
$$
define a foliation by circles $\mathcal{F}_{\varepsilon}$ of $S^3$. Moreover we suppose that there exists a common leaf $\gamma_0$ for every $\mathcal{F}_{\varepsilon}$, and along this leaf ${X_0}_{\vert \gamma_0} = {X_{\varepsilon}}_{\vert \gamma_0}$. In other words, $\varepsilon \rightarrow \mathcal{F}_{\varepsilon}$ is {\it tangent} to $\mathcal{F}_0$ {\it at} $\gamma_0$: the equality of the restriction of the $X_{\varepsilon}$'s at $\gamma_0$ can always be obtained by suitable reparametrization. We will show in the next section that this assumption is equivalent to the existence of a smooth curve of Seifert's leaves in the perturbation, {\it cfr.} Lemma (\ref{tangenza}).

Without loss of generality we can suppose that $\gamma_0$ is the trajectory of (\ref{hopf}) through the north pole $N=(0,0,0,1)$ of the $3$-sphere. Let
\[
\begin{cases}
\varphi &:S^3 -\{ N \} \rightarrow \mathbb{R}^3 \\
\underline y & =\varphi (\underline x) = \frac{\underline x'}{1-x_4}
\end{cases}
\]
$\underline x'=(x_1 , x_2 , x_3)^t$, be the stereographic map from north pole. Denoting $\varphi_* X_0 = d \varphi (\varphi^{-1})X_0 (\varphi^{-1})$, the Hopf vector field (\ref{hopf}) in $\underline y$-coordinates is
\[
\dot {\underline y} = (\varphi_* X_0)(\underline y)
\]
or more explicitly
\begin{equation}\label{stereo}
X_0 :
\begin{cases}
\dot y_1 & = -y_2 +y_1 y_3 \\
\dot y_2  &= y_1 + y_2 y_3 \\
\dot y_3 & = \frac{1}{2}(1 + y_3^2 -(y_1^2 + y_2^2)).
\end{cases}
\end{equation}
We observe that there exists a surface $\Sigma$, a ramified double covering of a $2$-sphere, with branching points at $\gamma_0$,  which is a {\it quasi-section} of $X_0$: the definition of this object follows, but we wish to warn the reader that it will be never used in statements and proofs of this article, and is reported here only to help the geometric insight into the problem. 
\begin{definition}\label{sezione}
Let $X_0 :P \rightarrow TP$ be a vector field on a manifold $P$. A {\it quasi-section} $\Sigma$ of $X_0$ is a codimension $1$ topological submanifold of $P$, possibly with boundary, which intersects all the trajectories of $X_0$, this intersection being transverse except at a submanifold $S$ of $\Sigma$ fibered by trajectories of $X_0$. Moreover $\Sigma$ is a smooth ramified covering over the orbit space $M$, with branch points at $S$. If $\Sigma$ has boundary this should be union of finitely many trajectories of $X_0$.
\end{definition}
Of course, the notion of quasi-section is close to that of Birkhoff section: a Birkhoff section is always a quasi-section, but a quasi-section (as will be our concern) need not to have boundary. On the other hand, usually after a suitable cutting of a quasi-section $\Sigma$ along the ramification set $S$, one obtains a Birkhoff section.

In the case of our example, in stereographic coordinates the quasi-section is
$$
\Sigma = \{ y_2 = 0 \} \cup \{ N \} 
$$
which is a two-sheeted ramified covering of the base space $S^2$, with branch points at $\gamma_0$. In fact, in the stereographic chart
\[
{X_0}_{\vert \Sigma} :
\begin{cases}
\dot y_1 &=y_1 y_3 \\
\dot y_2 &=y_1 \\
\dot y_3 &=\frac{1}{2} (1 + y_3^2 - y_1^2)
\end{cases}
\]
and the scalar product between this vector field and the unit normal vector to $\Sigma$
\[
\underline n =
\left(
\begin{array}{c}
0  \\
1 \\
0 
\end{array} \right)
\]
satifies
\[
{X_0}_{\vert \Sigma} \cdot \underline n = y_1
\]
which is always non-zero on $\Sigma - \gamma_0 = \Sigma - \{ y_1 = y_2 =0 \}$.

 To resolve the contact between the Hopf bundle and $\Sigma$ we blow up the fiber bundle $\xi_0$ along $\gamma_0$, a particular instance of a classical construction which will be described in detail in the next section. In stereographic coordinates, and in one of the two coordinates charts covering a neighbourhood of the divisor of the blown up manifold, this amounts to changing coordinates according to
\begin{equation}\label{blowup}
\sigma :
\begin{cases}
y_1 &=y_1 \\
u &=\frac{y_2}{y_1} \\
y_3 &=y_3
\end{cases}
\end{equation}
hence obtaining the lifting of $X_0$ to the vector field $\tilde X_0$ given by
\begin{equation}\label{scoppiato}
 \tilde X_0 :
\begin{cases}
\dot y_1 &= y_1(-u+y_3) \\
\dot u &= 1+u^2 \\
\dot y_3 &= \frac{1}{2}((1+y_3^2)-y_1^2(1+u^2)).
\end{cases}
\end{equation}

This vector field is real analytic up to the divisor $\mathcal{E}= \mathbb{RP} \times S^1 = Klein \, bottle$, whose equation in local coordinates is
$$
\mathcal{E}= \{ y_1 = 0 \} \cup \{ point \, at \, infinity \, of \, \mathbb{RP} \} \times S^1
$$
where $S^1 = \{y_3 - axis   \} \cup \{ N  \}$.

The blow up of the total space $S^3$ of the Hopf bundle is a closed Seifert manifold $\tilde S^3$, and the integral curves of $\tilde X_0$ define a Seifert fibration of $\tilde S^3$, according to the slight generalization of the original Seifert's definition given by Scott \cite{scott}. In fact, all the fibers in $\tilde S^3 - \mathcal{E}$ are regular, while a neighbourhood of $\mathcal{E}$ in $\tilde S^3$ is a solid Klein bottle. The blow up $\tilde \Sigma$ of $\Sigma$, in the local chart described by (\ref{blowup}), is
$$
\tilde \Sigma= \{ u y_1 =0 \} = \{ u=0 \} \cup \mathcal{E} = \hat \Sigma \cup \mathcal{E}
$$
where in local coordinates $\hat \Sigma = \{ u=0 \}$ and $\hat \Sigma \pitchfork \mathcal{E}$: here $\tilde \Sigma$ is the {\it total transform} and $\hat \Sigma$ is the {\it strict} or {\it proper transform} of $\Sigma$. From (\ref{scoppiato}) in local coordinates
\begin{equation}\label{divisore}
\tilde {X_0}_{\vert \mathcal{E}} :
\begin{cases}
\dot u & =1 + u^2 \\
\dot y_3 &= \frac{1}{2}(1+y_3^2)
\end{cases}
\end{equation}
{\it i.e.} the divisor $\mathcal{E} \simeq \mathbb{RP} \times S^1$ is a $2$-torus which is fibered by the integral curves of $\tilde X_0$ in closed curves of homotopy type $(2,1)$. The following properties easily follows, {\it cfr.} Proposition (\ref{risolvi}) :
\begin{itemize}
\item [(i)] all the closed orbits of $\tilde X_0$ meet transversally $\hat \Sigma$, {\it cfr.} (\ref{divisore}), hence:

{\it blowing up the Hopf bundle along a fiber we obtain a Seifert manifold, with a Seifert fibration defined by the integral curves of $\tilde X_0$, and $\hat \Sigma$ is a global section of $\tilde X_0$}, {\it i.e.} $\hat \Sigma$ intersects transversally every integral curve of $\tilde X_0$ \\
\item [(ii)] tangency of $\varepsilon \rightarrow \mathcal{F}_{\varepsilon}$ to $\mathcal{F}_0$ along $\gamma_0$ implies that $\tilde {X_{\varepsilon}}_{\vert \mathcal{E}} =\tilde {X_0}_{\vert \mathcal{E}}$, and this property togheter with compactness of $\tilde S^3$ implies that for sufficiently small $\overline \varepsilon >0$, and for every $\varepsilon$, $\vert \varepsilon \vert < \overline \varepsilon$, {\it every $\tilde X_{\varepsilon}$ has $\hat \Sigma$ as global section}, too \\
\item [(iii)] the dynamics of the vector fields $\tilde X_{\varepsilon}$ on the divisor $\mathcal{E}$  define a foliation of $\mathcal{E}$ by circles of homotopy type $(2, 1)$, $E=1$ being the Euler number of the Hopf bundle.
\end{itemize}
From these properties, in particular from $(ii)$, is easy to deduce, {\it cfr.} next section, that the foliations $\mathcal{F}_{\varepsilon}$ are generated by circle bundles which are smoothly isomorphic to $\xi_0$, {\it i.e.} Theorem (\ref{main}).

The main point of Section $2$ is to generalize $(i), (ii), (iii)$ to any circle bundle on $3$-manifolds, a crucial point being a precise statement of a well-known {\it localization principle} allowing to concentrate the topological information about a circle bundle over a closed oriented surface in an arbitrarily small neighbourhood of one of its fibers, see Lemma (\ref{localizzazione}) and \cite{montesinos} $\S 1$.

We end this introduction adding few words about the dynamical problem which originated this investigation.

\begin{definition}
An {\it oscillator} is a couple $(P,\mathcal{F})$, where $P$ is a closed manifold and $\mathcal{F}$ is a foliation by circles of $P$.

\end{definition}
We will usually refer, when this will be possible and will cause no ambiguities, to an oscillator $(P,\mathcal{F})$ as a couple $(P,X)$ where $X:P \rightarrow TP$ is a vector field whose integral curves are the leaves of $\mathcal{F}$.

The simplest example of an oscillator is $(S^1, \frac{\partial}{\partial \theta})$, another example is the Hopf bundle $(S^3 , X_0)$ defined in (\ref{hopf}). In general, circle bundles form a very special class of oscillators, which have a kind of {\it characteristic frequencies}, namely the integers numbers related to the characteristic class of the circle bundle when a basis on $H^2(M, \mathbb{Z})$, $M$ base space of the bundle, has been fixed. We are interested to the following questions:
\begin{itemize}
\item[(I)] Let $\xi_0$, $\xi_1$ be two circle bundles with total space $P$ and not homeomorphic orbit spaces $M_0$, $M_1$. Is it possible to construct a smooth $1$-parameter family $\varepsilon \rightarrow (P, \mathcal{F}_{\varepsilon})$ of oscillators such that $\mathcal{F}_{0}$, respectively $\mathcal{F}_1$, is the foliation whose leaves are the fibers of $\xi_0$, respectively $\xi_1$? \\
\item [(II)] is it possible to construct a smooth $1$-parameter family of oscillators connecting two circle bundles with the same total space and over the same base space, having different characteristic classes?
\end{itemize}

Theorem (\ref{main}) and Corollary (\ref{analitico}) answer, under suitable hypotheses, in the negative to these questions when $dim P=3$, while the example discussed in Section $3$ answers in the affirmative to question $(I)$ when $dim P=4$ and $\chi (M)=0$.

{\it Aknowledgements}: some of the arguments in this article are reminiscent of conversations I had several years ago with the late Marco Brunella.

\section{Blowing up of a circle bundle along a fiber and existence of a quasi-section}

In this section we will prove Theorem (\ref{parallele}), and its consequences Theorem (\ref{main}) and Corollary (\ref{analitico}), following the scheme sketched in the example developed in the previous section. In this framework, the first step will be to reduce the hypothesis of existence of a smooth curve of Seifert's leaves of $\varepsilon \rightarrow \mathcal{F}_{\varepsilon}$ converging to $\gamma_0$  to the condition of tangency of such perturbation to $\mathcal{F}_0$ along $\gamma_0$, according to Definition (\ref{tangenza}). This is the content of
\begin{lemma}\label{tangenza}
If $\varepsilon \rightarrow \mathcal{F}_{\varepsilon}$ is a smooth deformation of $\mathcal{F}_0$ as in Definition (\ref{perturbazione}) which has a smooth cuve of Seifert's leaves converging to $\gamma_0$, there exists a smooth family of diffeomorphisms $\varepsilon \rightarrow \eta_{\varepsilon}$, $\eta_{\varepsilon} : P \rightarrow P$, such that $\eta_0 = identity$ and $\eta_{\varepsilon} (\gamma_{\varepsilon}) = \gamma_0$. In other words, $\varepsilon \rightarrow \eta_{\varepsilon}(\mathcal{F}_{\varepsilon})$ is a smooth perturbation of $\mathcal{F}_0$ which is tangent to $\mathcal{F}_0$ along $\gamma_0$.
\end{lemma}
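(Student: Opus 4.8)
The plan is to construct $\eta_\varepsilon$ as the time-one flow of a carefully chosen smooth time-dependent vector field that drags $\gamma_\varepsilon$ onto $\gamma_0$, and then to upgrade this to a diffeomorphism of all of $P$ by a cutoff / isotopy extension argument. Concretely, I would proceed as follows.

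First, use the smoothness hypothesis on $\varepsilon\mapsto\gamma_\varepsilon$ (Definition \ref{liscia}) together with the normalization $T(\varepsilon)\equiv 2\pi$ from the Remark to produce a smooth parametrization of the family of leaves: a smooth map $(\theta,\varepsilon)\mapsto c_\varepsilon(\theta)\in P$, $\theta\in S^1=\R/2\pi\Z$, with $c_\varepsilon(\theta)=\phi^\theta_\varepsilon(p_\varepsilon)$, so that $c_\varepsilon(S^1)=\gamma_\varepsilon$, $c_0(S^1)=\gamma_0$, and $c_\varepsilon\to c_0$ in $C^1$ (indeed $C^k$) as $\varepsilon\to 0$, with $c_0$ a parametrization of $\gamma_0$. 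Since $\gamma_0$ and $\gamma_\varepsilon$ are embedded circles that are $C^1$-close, for $|\varepsilon|$ small $\gamma_\varepsilon$ lies in a tubular neighbourhood $U$ of $\gamma_0$; fix once and for all a smooth tubular-neighbourhood identification $U\cong S^1\times D^2$ (using that $P$ is orientable and $\gamma_0$ an embedded circle, so its normal bundle is trivial), with $\gamma_0=S^1\times\{0\}$. Under this identification, $\gamma_\varepsilon$ is the graph of a smooth map $f_\varepsilon\colon S^1\to D^2$ (after a further $\varepsilon$-dependent reparametrization of the $S^1$-coordinate, which is smooth by the Implicit Function Theorem since $\gamma_\varepsilon$ is transverse to the disc fibers for $\varepsilon$ small), with $f_0\equiv 0$ and $f_\varepsilon\to 0$ smoothly.

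Next, define on $U\cong S^1\times D^2$ the smooth $\varepsilon$-family of diffeomorphisms $\psi_\varepsilon(\theta,v)=(\theta,\,v-\chi(|v|)\,f_\varepsilon(\theta))$, where $\chi\colon[0,\infty)\to[0,1]$ is a fixed smooth bump function equal to $1$ near $0$ and supported in a small neighbourhood of $0$; for $|\varepsilon|$ small this is a diffeomorphism of $U$ that is the identity near $\partial U$ (so extends by the identity to a diffeomorphism $\eta_\varepsilon$ of all of $P$), depends smoothly on $\varepsilon$, satisfies $\eta_0=\mathrm{id}$, and by construction sends $\gamma_\varepsilon=\{(\theta,f_\varepsilon(\theta))\}$ to $\gamma_0=S^1\times\{0\}$. (Alternatively one can phrase the same construction invariantly via the isotopy-extension theorem applied to the smooth isotopy of embeddings $c_\varepsilon$, with smooth dependence on the parameter $\varepsilon$ — both routes give the same conclusion.) Finally, set $\eta_\varepsilon(\mathcal F_\varepsilon)$ to be the push-forward foliation; since $\eta_\varepsilon$ is a diffeomorphism carrying the leaf $\gamma_\varepsilon$ to $\gamma_0$, the curve $\gamma_0$ is now a common leaf of $\mathcal F_0$ and of every $\eta_\varepsilon(\mathcal F_\varepsilon)$, which is exactly tangency along $\gamma_0$ in the sense of Definition \ref{tangenza}; one then absorbs the discrepancy of the generators along $\gamma_0$ by a smooth reparametrization of the $X_\varepsilon$, as remarked in the text.

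The main obstacle is bookkeeping the \emph{smoothness in $\varepsilon$} at every stage: one must check that the reparametrization straightening $\gamma_\varepsilon$ to a graph over the $S^1$-factor, the map $f_\varepsilon$, and hence $\psi_\varepsilon$, all depend smoothly (resp. real-analytically, in the setting of Corollary \ref{analitico}) on $\varepsilon$ down to $\varepsilon=0$, and that the domain of validity $|\varepsilon|<\overline\varepsilon$ can be taken uniform — this is where the $C^1$-convergence in Definition \ref{liscia} and the compactness of $\gamma_0$ (equivalently of $S^1$) are used. Everything else is a routine application of the tubular neighbourhood theorem and the parametric isotopy-extension theorem.
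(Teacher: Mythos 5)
Your proposal is correct and takes essentially the same route as the paper: both represent $\gamma_\varepsilon$ as a graph $\theta\mapsto(\theta,f_\varepsilon(\theta))$ over $\gamma_0$ in a trivializing tubular neighbourhood and then push it onto the zero section by a cutoff-supported map equal to the identity outside that neighbourhood. The only cosmetic difference is that the paper realizes the translation as the time-one flow of the cutoff vector field $\rho\,W_\varepsilon$, whereas you write the shear $v\mapsto v-\chi(|v|)f_\varepsilon(\theta)$ directly.
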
 
\begin{proof}
From Definition (\ref{perturbazione}) and basic theory of differential equations, up to reparametrizing the vector fields $X_{\varepsilon}$ as in the remark following Definition (\ref{liscia}) , there exists a neighbourhood $U$ of $q_0=\pi (\gamma_0)$ in the base space $M$ of $\xi_0$ and local trivializing coordinates $(\theta , x , y )$, $\theta = \theta \,  mod 2  \, \pi$, in $S^1 \times U$ near $\gamma_0$ such that $\psi : S^1 \times U \rightarrow  \pi^{-1}(U)$ is the equivariant trivializing  diffeomorphism and the foliations $\mathcal{F}_{\varepsilon}$ in trivializing coordinates are described by
\[
X_{\varepsilon} :
\begin{cases}
\dot \theta & = 1 \\
\dot x & = \varepsilon X(x,y,\theta , \varepsilon) \\
\dot y &= \varepsilon Y(x,y,\theta ,\varepsilon)
\end{cases}
\]
where the functions entering in the definition of this differential equation are smooth, $\gamma_0 = \{ (\theta , x , y )  : x=y=0 \}$ and the smooth curve of Seifert's leaves can be represented as
\begin{equation}\label{curva}
\gamma_{\varepsilon} : (\varepsilon , \theta )  \rightarrow (\theta,x(\theta , \varepsilon),y(\theta , \varepsilon))
\end{equation}
where $\theta   \rightarrow (x(\theta , \varepsilon)$, $\theta   \rightarrow (y(\theta , \varepsilon)$ are $2 \pi$-periodic  smooth functions for every $\varepsilon$. Let
\[
W_{\varepsilon} :
\begin{cases}
\dot \theta & = 0 \\
\dot x & = -x(\theta , \varepsilon) \\
\dot y &= -y(\theta , \varepsilon)
\end{cases}
\]
and let $\rho : P \rightarrow \mathbb{R}$ be a smooth function, $0\leq \rho \leq 1$ with support in  $\pi^{-1}(U)$, such that $\rho \equiv 1$ in $\pi^{-1}(U')$,   $U' \subset \subset U$. Let
\[
Z_{\varepsilon} = \rho W_{\varepsilon}.
\]

The flow ${\phi^t}_{\varepsilon}$ defined by this vector field is a $1$-parameter group of automorphisms of $P$ which is the identity outside $\pi^{-1}(U)$ and which in $\pi^{-1}(U')$ can be written in local trivializing coordinates as
\[
{\phi^t}_{\varepsilon} (\theta , x(\theta , \varepsilon) , y(\theta , \varepsilon))= (\theta , -x(\theta , \varepsilon) t + x(\theta , \varepsilon) , -y(\theta , \varepsilon) t + y(\theta , \varepsilon)).
\]
Therefore
\[
\eta_{\varepsilon} = \phi^1_{\varepsilon}
\]
satisfies the statement.
\end{proof}
Up to substituting $\varepsilon \rightarrow \mathcal{F}_{\varepsilon}$ with $\varepsilon \rightarrow \eta_{\varepsilon}(\mathcal{F}_{\varepsilon})$, Theorem (\ref{main}) reduces to Theorem (\ref{parallele}), whose proof will fill most part of this section.

The next lemma formalizes a well-known localization principle, stating that the all the information needed to classify up to smooth bundle equivalence a circle bundle $\xi_0$ of type (\ref{bundle}), with total space $P$ which is a closed oriented $3$-manifold and base space which is a closed oriented surface, is concentranted in an arbitrarily small neighbourhood of a distinguished fiber $\gamma_0$, and reduces to an integer $E$, the {\it Euler number} of $\xi_0$. We also need to explain $E$ in terms of the variational equation of $X_0$ along $\gamma_0$, or equivalently as linking number between $\gamma_0$ and a fiber of the bundle in a neighbourhood of $\gamma_0$. Though most, if not all, these iussues are known, {\it cfr.} \cite{montesinos} $\S 1$ for a qualitative account and \cite{bott&tu} $\S 11$ for some analytic proofs in a setting similar to that of the present article, we choose to give a complete proof of Lemma (\ref{localizzazione}), for it stays at the core of the proofs of our main results.

Let $\xi_0$ as in (\ref{bundle}), with $dim P = 3$, $P, M$ both closed and oriented, and let $\gamma_0$ be a distinguished fiber of $\xi_0$, $q_0 = \pi (\gamma_0)$. It will be useful to choose on $M$ a Riemannian metric, and denote $\mathbb{D}_r (q_0)$ the disc centered at $q_0$ of radius $r$. Let $U_0 = \mathbb{D}_{2r} (q_0)$, $U_1 = M-\mathbb{D}_{\frac{r}{2}} (q_0)$: hence
\[
\partial \mathbb{D}_r (q_0) \subset U_1 \cap U_0.
\]
Along the proof of Lemma (\ref{localizzazione}) we will prove that a bundle $\xi_0$ of the above described characteristics, with a distinguished fiber $\gamma_0$, always admits a bundle structure made by the cover $\mathcal{U}=\{U_0 , U_1 \}$ and by the trivializing diffeomorphisms
\begin{equation}\label{trivializzazione}
\begin{cases}
\psi_0 & : \pi^{-1} (U_0) \rightarrow U_0 \times S^1 \\
\psi_1 & : \pi^{-1} (U_1) \rightarrow U_1 \times S^1
\end{cases}
\end{equation}
which are $S^1$-equivariant, the action on $P$ being that generated by $X_0$ and the action on the product spaces being the natural action on the second factor. We fix a complex coordinate $w$ on $U_0$, and refer $U_0 \cap U_1$ to such coordinate, too. We will denote
\[
\theta = \frac{w}{\vert w \vert}
\]
and $z_0$, respectively $z_1$, the fiber coordinate on $U_0 \times S^1$, respectively on $U_1 \times S^1$. We will also denote $z_0 = e^{i \varphi_0}$, $z_1 = e^{i \varphi_1}$. Th transiction function
\[
\begin{cases}
g_{01} &: U_1 \cap U_0 \rightarrow S^1 \\
z_0 &= g_{01} z_1
\end{cases}
\]
completely characterizes $\xi_0$. The above fixed notations will be used in the statement of the following lemma, whose proof is almost completely contained in \cite{montesinos}, \cite{bott&tu}, \cite{constantin}.

\begin{lemma}\label{localizzazione}
\begin{itemize}
\item [(i)] there exists a smooth section $s : U_1 \rightarrow \pi^{-1}(U_1)$ \\
\item [(ii)] $\xi_0$, with the distinguished fiber $\gamma_0$, admits the trivializing structure (\ref{trivializzazione}), with transiction function
\[
g_{01}(w)=(\frac{w}{\vert w \vert})^E \\
\]
\item [(iii)] the variational equation along $\gamma_0$ of $X_0$ is
\[
\frac{\partial X_0}{\partial w} :
\begin{cases}
\dot w &= -i E w \\
\dot \varphi_0 &= 1
\end{cases}
\]
\\
\item [(iii)'] there exist good coordinates, still named $(w , \varphi_0)$, in a neighbourhood of $\gamma_0$ in $P$ such that

\[
X_0 :
\begin{cases}
\dot w &= -i E w \\
\dot \varphi_0 &= 1
\end{cases}
\]
\\
\item [(iv)]  $E$ is the linking number in $P$ between $\gamma_0$ and a fiber of $\xi_0$ in a sufficiently small neighbourhood of $\gamma_0$.
\end{itemize}
\end{lemma}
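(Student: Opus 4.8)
The plan is to exhibit the two-chart bundle atlas $\mathcal U=\{U_0,U_1\}$ explicitly, read off its transition function, and then deform the resulting \emph{linear} gluing data into an exact normal form for $X_0$ near $\gamma_0$.

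For (i) and (ii): I would note that $U_1=M\setminus\mathbb D_{r/2}(q_0)$ is a compact oriented surface with a single boundary circle, hence deformation retracts onto a $1$-dimensional CW complex (a wedge of circles). Since $\xi_0$ is a principal $S^1$-bundle (the $S^1$-action being the flow of $X_0$), the obstruction to a global section of $\xi_0|_{U_1}$ lies in $H^2\big(U_1;\pi_1(S^1)\big)=H^2(U_1;\mathbb Z)=0$; this produces the section $s$ of (i), equivalently an equivariant trivialisation $\psi_1$, $\psi_1^{-1}(q,z):=z\cdot s(q)$. As $U_0=\mathbb D_{2r}(q_0)$ is a disc, $\xi_0|_{U_0}$ is likewise equivariantly trivial; fix such a $\psi_0$. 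On the annular overlap $U_0\cap U_1\supset\partial\mathbb D_r(q_0)$ the transition function $g_{01}:U_0\cap U_1\to S^1$ has a well-defined integer degree $E:=\deg\big(g_{01}|_{\partial\mathbb D_r(q_0)}\big)$, which is exactly the Euler number of $\xi_0$ (the clutching description of the Euler class; orientability of $P,M$ fixes the sign, and two such atlases differ only by gauge transformations, which move $g_{01}$ inside its homotopy class). Then $w\mapsto g_{01}(w)(\bar w/|w|)^{E}$ is null-homotopic on $U_0\cap U_1$, so it equals $e^{i\lambda(w)}$ for a smooth $\lambda:U_0\cap U_1\to\mathbb R$; choosing a partition of unity $\{\rho_0,\rho_1\}$ subordinate to $\{U_0,U_1\}$ and setting $\lambda_0:=\rho_1\lambda$ (which extends smoothly by $0$ to $U_0$), $\lambda_1:=-\rho_0\lambda$ (which extends smoothly to $U_1$), one has $\lambda=\lambda_0-\lambda_1$ on the overlap. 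Replacing $\psi_0$ by $e^{-i\lambda_0}\psi_0$ and $\psi_1$ by $e^{-i\lambda_1}\psi_1$ — gauge changes acting on the base variable alone, hence still $S^1$-equivariant — normalises the transition function to $g_{01}(w)=(w/|w|)^{E}$, which is (ii).

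For (iii)$'$ (and hence (iii)) and (iv): in the equivariant chart $\psi_0$ one has $(\psi_0)_*X_0=\partial_{\varphi_0}$ \emph{exactly}, with $(w,z_0)\in U_0\times S^1$, $z_0=e^{i\varphi_0}$, $\gamma_0=\{w=0\}$. I would then apply the twist $(w,z_0)\mapsto(w\,z_0^{-E},\,z_0)$, a diffeomorphism of $\pi^{-1}(U_0)$ since $E\in\mathbb Z$ makes $z_0\mapsto z_0^{-E}$ smooth on $S^1$, and the identity on $\gamma_0$. Along a trajectory of $\partial_{\varphi_0}$ one has $\dot w=0$, $\dot z_0=iz_0$, hence the new coordinate $\tilde w:=w\,z_0^{-E}$ satisfies $\dot{\tilde w}=-iE\,\tilde w$; so in the twisted (``good'') coordinates, still named $(w,\varphi_0)$, $X_0$ is exactly $\dot w=-iEw,\ \dot\varphi_0=1$, which is (iii)$'$, and being already linear in $w$ it coincides with its own variational equation along $\gamma_0$, giving (iii). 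For (iv): a fibre $\gamma'$ of $\xi_0$ through a point with $0<|w|$ small is the closed orbit $t\mapsto(w_0e^{-iEt},t)$, $t\in\mathbb R/2\pi\mathbb Z$; inside the solid torus $\{|w|<\delta\}\times S^1$ with core $\gamma_0=\{w=0\}$ it runs once around $\varphi_0$ and $-E$ times around $\arg w$, so $\mathrm{lk}_P(\gamma_0,\gamma')=E$ (computed by intersecting $\gamma_0$ with a thickened half-plane $\{\arg w=\mathrm{const}\}$ serving as a local Seifert surface of $\gamma'$; the sign is fixed by the orientations). Together with $E=\deg g_{01}$ this is (iv).

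The one genuinely substantive point — everything else being routine obstruction theory and clutching bookkeeping — will be the claim that a \emph{single} integer $E$ discharges three roles at once: degree of $g_{01}$, rotation rate of $X_0$ in the good coordinates, and linking number of $\gamma_0$ with a neighbouring fibre. Concretely, one must check that the twist in (iii)$'$ carried out with this $E$ (rather than with some other $k\in\mathbb Z$, which would also give a perfectly legitimate chart in which $X_0$ reads $\dot w=-ikw$) is the one compatible on $U_0\cap U_1$ with $\psi_1$, i.e.\ the one realising the framing of $\gamma_0$ inherited from the bundle; matching the orientation conventions so that these three incarnations of $E$ agree \emph{in sign} is the delicate step, and is where orientability of $P$ and $M$ genuinely enters.
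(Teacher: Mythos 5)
Your computations are correct and the overall architecture (two--chart atlas over $U_0$, $U_1$; degree of the transition function; normal form near $\gamma_0$; linking number read off from the normal form) parallels the paper's, but the individual steps are carried out by genuinely different means. For (i) the paper constructs $s$ by hand over a handle decomposition $M=H^0\sqcup 2gH^1\sqcup H^2$, interpolating lifts to the universal cover of $S^1$ over each $1$-handle; your obstruction-theoretic argument ($U_1$ deformation retracts onto a wedge of circles, hence $H^2(U_1;\mathbb{Z})=0$) is shorter and equivalent. For (ii) you quote the clutching description of the Euler class, whereas the paper proves the identification $\deg(\rho\circ s)=E$ from scratch (Lemma~\ref{grado}) by choosing a connection whose curvature is supported in the annulus $U_0\cap U_1$ and applying Stokes; on the other hand, your gauge normalisation making $g_{01}$ equal to $(w/|w|)^E$ on the nose, and not merely homotopic to it, is a step the paper passes over in silence and is worth keeping. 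For (iii)$'$ your twist $(w,z_0)\mapsto(wz_0^{-E},z_0)$ is precisely the Dehn twist realising the paper's auxiliary field $Z$ ($\dot\Theta=-E$, $\dot\phi_0=1$) on the boundary torus, so the two computations coincide; the paper passes from (iii)$'$ to (iii) via Bochner linearisation plus suspension (Lemma~\ref{sospensione}), which your ``already linear in $w$'' remark replaces.

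The genuine gap is the one you name in your final paragraph and then leave undischarged. A twist by any $k\in\mathbb{Z}$ produces coordinates in which $X_0$ reads $\dot w=-ikw$, $\dot\varphi_0=1$; likewise the ``linking number'' in your (iv) is, by construction, the winding of a nearby fibre around $\gamma_0$ \emph{relative to the framing defined by the twisted coordinate}, so it returns whatever integer you twisted by. As written, therefore, (iii), (iii)$'$ and (iv) are established for every integer simultaneously, hence for none in particular: the entire content of these statements is that the distinguished framing of $\gamma_0$ --- the one induced by the section $s$ over $U_1$, equivalently by the curves $\{z_1=\mathrm{const}\}$ on $\pi^{-1}(\partial\mathbb{D}_r(q_0))$, whose class is $m+El$ by part (ii) --- singles out $k=E$. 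This is exactly where the paper spends Lemma~\ref{grado} (the section, transported into the normal bundle $\rho:N_{\gamma_0}\to\gamma_0$, has winding number $E$) together with the characterisation of $Z$ by $[Z,X_0^{\perp}]=0$ and $Z\pitchfork X_0^{\perp}$, with $X_0^{\perp}$ tangent to those sections. To complete your argument you must actually relate the twist parameter to the winding of $s$ in $N_{\gamma_0}$ and fix the sign using the orientations of $P$ and $M$; ``one must check compatibility with $\psi_1$'' is the statement of the missing lemma, not its proof.
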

\begin{proof}
 Let $g$ be the genus of the base space $M$: a direct application of the Handle Presentation Theorem, see \cite{kosinski} for the general theory, and \cite{montesinos} $\S 1$, \cite{constantin} for our case of study, gives the decomposition
\begin{equation}\label{manici}
M=H^0 \bigsqcup 2 g H^1 \bigsqcup H^2
\end{equation}
where
\[
\begin{cases}
H^0 &= \{ point \} \times \overline {\mathbb{D}}^2 \\
H^1 &= \overline {\mathbb{D}}^1 \times \overline {\mathbb{D}}^1 = [0,1] \times [0,1] \\
H^2 &= \overline {\mathbb{D}}^2 \times \{ q_0 \}
\end{cases}
\]
where the $j$-handles $H^j$, $j=0,1,2$, are joined through smooth glueing maps (which we usually forget in the description which follows) which identify points of the boundary of of each $H^1$ with portions of the boundary of $H^0$ or $H^1$. Moreover in (\ref{manici})  we write $2g H^1$  for $2g$ copies of the $1$-handles. One important point is that the Classification Theorem for compact $1$-manifolds \cite{milnor} implies that the glueing maps are unique (up to orientation preserving diffeomorphisms), and therefore (\ref{manici}) contains all the information about $M$. We choose $H^2$ such that
\[
H^2 = \overline {\mathbb{D}_{2r}(q_0)} = \overline {U_0}
\]
and
\[
\overline U_1 = H^0 \bigsqcup 2g H^1.
\]
We proceed now to construct the section
\[
s : U_1 \rightarrow \pi^{-1}(U_1)
\]
following the unpublished dissertation \cite{constantin}. $H^0$ is contractible, hence a section $s : H^0 \rightarrow \pi^{-1}(H^0)$ exists. We whish to extend it to $s: H^0 \bigsqcup H^1 \rightarrow \pi^{-1}(H^0 \bigsqcup H^1)$. Once this will be done, the proof of statement $(i)$ will follow from the application of this same argument $2g$-times. The passage from the section over $H^0$ to the section over $H^0 \bigsqcup H^1$ goes as follows. The restriction of the section $s : H^0 \rightarrow \pi^{-1}(H^0)$ to $H^0 \cap \partial H^1$, where $\partial H^1 = [0,1] \times \{ 0 \} \cup  [0,1] \times \{ 1 \}$ reduces the problem to the interpolation between the section $s$ over $[0,1] \times \{ 0 \}$ and over $ [0,1] \times \{ 1 \}$. There are several ways to realize this interpolation, but perhaps the easier one, given by Constantin in \cite{constantin}, is the following. Let
\[
\begin{cases}
\rho &: \mathbb{R} \rightarrow S^1 \simeq \frac{\mathbb{R}}{2 \pi \mathbb{Z}} \\
\rho (t)& = e^{it}
\end{cases}
\]
the universal covering map of the circle, and lift $s_j: I \times \{ j \} \rightarrow \pi^{-1} (I \times \{ j \}) \simeq I \times \{ j \} \times S^1$, $j=0,1$, to
\[
\hat {s_j}: I \times \{ j \} \rightarrow \pi^{-1} (I \times \{ j \}) \simeq I \times \{ j \} \times \mathbb{R}.
\]
We interpolate linearly the $\hat {s_j}$'s, $j=0,1$ to give
\[
\tilde s (\tau , u) = (1-\tau)\hat {s_0} (u) + \tau \hat {s_1} (u)
\]
where $\tau , u \in [0,1]$ and then define $s:H^0 \bigsqcup H^1 \rightarrow \pi^{-1}(H^0 \bigsqcup H^1)$ as the section $s$ over $H^0$ and extend it as $\rho \circ \tilde s (\tau ,u)$ if $(\tau , u) \in H^1$, obtaining the desired smooth section over $H^0 \bigsqcup H^1$. The proof of statement $(i)$ is concluded.

To proceed to the proof of statement $(ii)$ we need the following slight variation to a result proved in \cite{bott&tu} $\S 11$ for rank $2$ vector bundles and with referement to a trivializing neighbourhood of $\gamma_0$

\begin{lemma}\label{grado}
Let
\[
\rho : N_{\gamma_0} \rightarrow \gamma_0
\]
be the normal bundle to $\gamma_0$ in $P$, and let $N_{\gamma_0}^{\varepsilon}$ be a sufficiently small neighbourhood of the zero section which is diffeomorphic to a small tubular neighbourhood of $\gamma_0$ in $P$. Let $s :U_1 \rightarrow \pi^{-1}(U_1)$ be the section of $\xi_0$ whose existence has been proved in statement $(i)$. Then, for sufficiently smooth $r>0$
\[
\rho \circ s_{\vert} : \partial \mathbb{D}_r (q_0) \simeq S^1 \rightarrow \gamma_0 \simeq S^1
\]
and
\[
deg \, \rho \circ s_{\vert} =E .
\]
\end{lemma}
\begin{proof} (of Lemma (\ref{grado})) we recall that the Euler class $e \in H^2 (M, \mathbb{Z})$ is the image of the isomorphism between the abelian group of circle bundles and $H^2 (M, \mathbb{Z})$, {\it cfr.} \cite{chern}: this diffeomorphism depends on the chosen covering of $M$, {\it i.e.} it essentially depends on the choice of $\gamma_0$. The Euler number when $dim M=2$ is
\[
E=\int_{M} e .
\]
Let $\psi$ be a connection $1$-form of $\xi_0$, {\it i.e.} a globally defined $1$-form on $P$ which in the two trivializing charts (\ref{trivializzazione}) has the form \cite{chern}
\begin{equation}\label{conn}
\begin{cases}
\psi = - d\varphi_0 + \pi^* \theta_{U_0} \, \, in \, \, U_0 \\
\psi = - d\varphi_1 + \pi^* \theta_{U_1} \, \, in \, \, U_1 .
\end{cases}
\end{equation}
The gauge potentials $U_0 , U_1$, though only locally defined, satisfy
\begin{equation}\label{gauge}
\theta_{U_1} - \theta_{U_0} = i \, d \, log \, g_{01} \, \, in \, \, U_0 \cap U_1
\end{equation}
hence
\[
\Omega = d \theta_{U_1} = d \theta_{U_0}
\]
is a globally defined $2$-form in $M$, the curvature of the connection, and
\[
d \psi = \pi^* \Omega.
\]
From (\ref{gauge}) it easily follows that if $\tilde {\theta_{U_0}} , \tilde {\theta_{U_1}}$ is another collection of gauge potentials then
\[
\theta_{U_0} - \tilde {\theta_{U_0}} = \theta_{U_1} - \tilde {\theta_{U_1}} = \chi
\]
is a globally defined $1$-form on $M$, therefore for any connection the curvature $\Omega$ defines the same cohomology class of $e$ hence
\[
\int_M \Omega = E.
\]

Let $\rho_0 , \rho_1$ be a partition of unity subordinate to the cover $\mathcal{U} = \{   U_0 , U_1 \}$, then defining
\[
\begin{cases}
\theta_{U_0} &= \frac{1}{2 \pi}\rho_0 \, i \, d \, log \, g_{01} \\
\theta_{U_1} &= \frac{1}{2 \pi}\rho_1 \, i \, d \, log \, g_{01}
\end{cases}
\]
we get that
\begin{equation}\label{supporto}
supp \, \Omega \subset U_0 \cap U_1 .
\end{equation}
Recalling that $\pi \circ s = \Id_{U_1}$, we obtain
\[
E=\int_M \Omega = \int_{M-\mathbb{D}r(q_0)} s^* \pi^* \Omega = \int_{M-\mathbb{D}r(q_0)} s^* d \psi = - \int_{\partial  \mathbb{D}_r (q_0)} s^* \psi
\]
where in the last equality we used Stokes' Theorem and considerations on the orientation of $\partial  \mathbb{D}_r (q_0)$ as boundary of $\mathbb{D}r(q_0)$ or $M.-\mathbb{D}r(q_0)$. On the other hand, from (\ref{supporto}) we have that over $\partial  \mathbb{D}_r (q_0)$
\[
\psi = - d \varphi_0
\]
{\it i.e.} on $\partial  \mathbb{D}_r (q_0)$
\[
-\psi = \rho^* \sigma
\]
where $\sigma$ is the generator of the $1$-dimensional cohomology on $\gamma_0$, therefore
\[
E= - \int_{\partial  \mathbb{D}_r (q_0)} s^* \psi = \int_{\partial  \mathbb{D}_r (q_0)} s^* \rho^* \sigma = \int_{\partial  \mathbb{D}_r (q_0)} (\rho \circ s)^* \sigma = deg (\rho \circ s)
\]
and this ends the proof of Lemma (\ref{grado}).
\end{proof}
We come back now to the proof of statement $(ii)$ of Lemma (\ref{localizzazione}). We observe that
\[
{g_{01}}_{\vert} : \partial \mathbb{D}_r (q_0) \simeq  S^1 \rightarrow \gamma_0 \simeq S^1 
\]
has a well-defined degree that in view of Lemma (\ref{grado}) must satisfy
\[
deg \, {g_{01}}_{\vert} = E
\]
which proves the second statement of Lemma (\ref{localizzazione}).

We come now to the proof of statements $(iii), (iii)'$. Firstly, we prove that these two statements are equivalent, as a consequence of the following well-known

\begin{lemma}\label{sospensione}
Let $X: \mathcal{T} \rightarrow T \mathcal{T}$ be a smooth vector field on the $3$-dimensional solid torus $\mathcal{T} = \mathbb{D}_r \times S^1$, whose integral curves define a foliation by circles of $\mathcal{T}$. We suppose that $\gamma_0 = \{ 0 \} \times S^1$ is an integral curve of $X$ and that the period function of $X$ is smooth. Then, up to choosing $r>0$ sufficiently small, $X$ is smoothly conjugated to the vector field
\begin{equation}\label{bochner}
X_L :
\begin{cases}
\dot w &= i l w \\
\dot \varphi &= 1
\end{cases}
\end{equation} 
where $w \in \mathbb{C}$, $\vert w \vert < r$, is a complex coordinate in $\mathbb{D}_r$, $\varphi = \varphi \, mod \, 2 \pi$ is a coordinate along $\gamma_0$ and $l \in \mathbb{Z}$.
\end{lemma}
\begin{proof} ( of Lemma (\ref{sospensione}))
Firstly, we observe that up to smooth reparametrization obtained through multiplication of $X$ by a smooth positive function, we can suppose that all the integral curves of $X$ are $2 \pi$-periodic. Up to reducing $r>0$ $\Sigma = \mathbb{D}_r \times \{ 0 \}$ is a Poincar\'e section of $X$ and $\mathcal{P} : \Sigma \rightarrow \Sigma$ is the relative Poincar\'e map, which is periodic of period $\vert l \vert$. An application of Bochner Linearization Theorem gives
\begin{equation}\label{rotazione}
\zeta \circ \mathcal{P} = R_l \circ \zeta
\end{equation}
where
\[
\begin{cases}
R_l w &= e^{il \theta} \\
\zeta &= \frac{1}{\vert l \vert} \sum_{k=0}^{\vert l \vert -1} R_l^{-k} \circ \mathcal{P}^k  
\end{cases}
\]
where $\theta = \frac{w}{\vert w \vert}$. The conjugacy $\zeta$ can be suspended to give a conjucacy between the flows $\phi^t_X , \phi^t_L$ of $X$ and $X_L$ as follows, hence concluding the proof of the lemma. Let $\tau : \mathcal{T} \rightarrow [0,2 \pi[$ the smooth function, defined by the Implicit Function theorem, such that $\phi^{-t}_X \notin \Sigma$ if $0<t<\tau$, $\phi^{--\tau}_X \in \Sigma$. The group property of the flow of $X$ implies that for sufficiently small $t>0$
\begin{equation}\label{gruppo}
\tau (\phi^t_X) = \tau + t .
\end{equation}  
Let
\[
\eta = \phi^{\tau}_L \circ \zeta \circ \phi^{-\tau}_X
\]
then from (\ref{gruppo})
\[
\phi^t_L \circ \eta \circ \phi^{-t}_X = \phi^t_L \circ \phi^{\tau}_L \circ \zeta \circ \phi^{-\tau}_X \circ \phi^{-t}_X = \eta
\]
{\it i.e.} $\phi^t_L \circ \eta = \eta \circ \phi^t_X$.

\end{proof}
So statements $(iii)$ and $(iii)'$ are equivalent and we just need to prove the first one. We also observe that from the proof of the above lemma it follows that the variational equation of $X_0$ in a sufficiently small neighbourhood of $\gamma_0$ is of the type
\[
\frac{\partial X_0}{\partial w} :
\begin{cases}
\dot w &= i \alpha w \\
\dot \varphi_0 &= 1 
\end{cases}
\]
where $\alpha \in \mathbb{Q}$, therefore we only must prove that $\alpha = - E$. Let us choose, with referement to (\ref{trivializzazione}), on $\mathcal{T} = \psi^{-1}(\partial \mathbb{D}_r (q_0) \times S^1)$ coordinates $(\Theta , \phi_0)$, where $\Theta = \frac{w}{\vert w \vert}$, but these are {\it not} the trivializing coordinates in $U_0 \times S^1$. Firstly, we observe that from
\[
z_0 = g_{01}(w) z_1 = (\frac{w}{\vert w \vert})^E z_1
\]
the foliation of $\mathcal{T}$ defined by $\{ z_1 = constant   \}$ is defined in $(\Theta , \phi_0)$-coordinates by the vector field
\[
X_0^{\perp} :
\begin{cases}
\dot \Theta &= \frac{1}{E} \\
\dot \phi_0 &= 1
\end{cases}
\]
{\it i.e.} naming $m=\{(\Theta , \phi_0): \phi_0 =0    \}$ and $l=\{  (\Theta , \phi_0): \Theta =0      \}$ the integral curves of $X_0^{\perp}$ are in the homotopy class $m + E l$ of $\mathcal{T}$. Let $Z : \mathcal{T} \rightarrow T \mathcal{T}$ be the vector field
\[
Z:
\begin{cases}
\dot \Theta &= -E \\
\dot \phi_0 &=1 .
\end{cases}
\]
$Z$ satisfies
\[
\begin{cases}
Z \pitchfork X_0^{\perp} \\
[Z, X_0^{\perp}]=0
\end{cases}
\]
therefore $Z$ is conjugated on $\mathcal{T}$ to the $S^1$-action which defines the integral curves of $X_0^{\perp}$ starting from one of them as initial data. In other words, $Z$ is conjugated to $X_0$ on $\mathcal{T}$. We observe that the integral curves of $Z$ are of homotopy type $-E m + l$. Extending $Z=X_0$ to $\vert w \vert = r'$, $0<r' <r$, we have that $\vert w \vert$ is a first integral and
\[
\dot w = \frac{d \vert w \vert e^{i \theta}}{dt}= \vert w \vert i e^{i \Theta} \dot \theta = -i E w
\]
and therefore in a sufficiently small neighbourhood of $\gamma_0$ the vector field $X_0$ is
\begin{equation}\label{ultimo}
\begin{cases}
\dot w &= -i E w \\
\dot \phi_0 &= 1 .
\end{cases}
\end{equation}
Statement $(iv)$ is a straightforward application of the interpretation of the linking number between $\gamma_0$ and an integral curve of (\ref{ultimo}) as the intersection number between $\gamma_0$ and a local section to (\ref{ultimo}) transverse to $\gamma_0$.
\end{proof}

Let us suppose that we are able to extend the smooth section over $U_1$ obtained in the previous lemma to a section over the punctured base space $M- \{ q_0 \}$, $s: M- \{ q_0 \} \rightarrow \pi^{-1}(M- \{ q_0 \})$, and let $s(M- \{ q_0 \}) = \Sigma$. One could reasonably conjecture that $\Sigma \cup \gamma_0$ is then a quasi-section, with branch points at $\gamma_0$: in fact, this is what happens to be true in the example we gave in Section $1$, concerning the Hopf fibration. The proof of this claim in full generality has to deal with two difficulties: the proof of the extension up to $M- \{ q_0 \}$ of the previously constucted section (which is not a too serious obstruction: we actually will realize this extension below) and the analytic proof of regularity of the quasi-section up to the boundary.

To avoid such difficulties, and also to clarify the geometry of the tangency of (the closure in $P$ of) $\Sigma$ along the fiber $\gamma_0$ of the foliation defined by $\xi_0$, we abandon the definition of quasi-section and analyze the local behaviour of the foliations $\mathcal{F}_{\varepsilon}$ through the {\it blow up } of $\xi_0$ along the tangency fiber $\gamma_0$.

Though a standard tool in several fields of mathematics, expecially in algebraic geometry, see {\it e.g.} \cite{griffiths-harris}, we find useful to describe the blowing up construction with some details in the particular case of blowing up a fiber $\gamma_0$ of a circle bundle $\xi_0$, proving some properties which will be formally stated in Proposition (\ref{risolvi}).

Blowing up $\xi_0$ at the fiber $\gamma_0$ means the definition of the triple $(\tilde P_{\gamma_0} , \sigma , \tilde {\mathcal{F}}_0)$ where
\begin{itemize}\label{scoppio}
\item [(i)] $\tilde P_{\gamma_0}$ is a smooth manifold \\
\item [(ii)] $\sigma : \tilde P_{\gamma_0} \rightarrow P $ is a smooth map, $\sigma_{\vert} : \tilde P_{\gamma_0} - \mathcal{E} \rightarrow P - \gamma_0$ is a diffeomorphism, $\mathcal{E}=\sigma^{-1}(\gamma_0)$ is the total space of the projectivized normal bundle to $\gamma_0$ \\
\item [(iii)] $\tilde {\mathcal{F}}_0$ is a smooth foliation by circles of $\tilde P_{\gamma_0}$ such that $\sigma_{\vert} (\tilde {\mathcal{F}}_0) = \mathcal{F}_0$ on $ \tilde P_{\gamma_0} - \mathcal{E}$.
\end{itemize}
We recall that the normal bundle to $\gamma_0$ in $P$ is
\[
\begin{cases}
\rho : & N_{ \gamma_0} \rightarrow \gamma_0 \\
N_{ \gamma_0} &= \frac{(TP)_{\vert \gamma_0}}{T\gamma_0}
\end{cases} 
\]
and its projectivization is the projective bundle
$$
\mathcal{E}=\mathbb{P}(N \gamma_0) \rightarrow^{\pi_{\gamma_0}} \gamma_0
$$
whose total space $\mathcal{E}$ is called the {\it divisor} of the blow up.

The construction of $\tilde {P_{\gamma_0}}$ calls for the definition of a smooth atlas $\tilde {\mathcal{A}}$, definition which is based on the atlas $\mathcal{A}$ of $P$ adapted to the bundle structure of $\xi_0$. To clarify this point, we recall that from Lemma (\ref{localizzazione}) $\xi_0$ is trivialized over two sets $U_0 \times S^1$, $U_1 \times S^1$ where $U_0 = \mathbb{D}_{2 r} (q_0)$ and $U_1 = M - \mathbb{D}_{\frac{r}{2}}(q_0)$, $M$ base surface of the bundle. The atlas $\mathcal{A}$ is defined in (\ref{trivializzazione}). A coordinate function is chosen in $U_0$ in the form of a complex coordinate $w=x+iy$, $\vert w \vert < 2r$, $w(q_0)=0$, and we intentionally will identify $U_0$ with the  disc $\mathbb{D}_{2 r} (q_0)$ in $\mathbb{C}$, and $U_0 \cap U_1$ with the punctured disc $\dot {\mathbb{D}_{2 r} (q_0)}$. Moreover we will always consider a coordinate along the fiber $S^1$ as a complex number of modulus $1$, $z=e^{i \varphi}$, or equivalently as a $mod \, 2 \pi$ real variable $\varphi$: in other words, we will always work as this fiber coordinate would be (virtually) a global one.

The atlas $\tilde {\mathcal{A}}$ is obtained by $\mathcal{A}$ removing the local chart
$$
(w,\varphi)  : U_0 \times S^1 \rightarrow \mathbb{D} \times S^1
$$
and substituting it with two charts
\[
\begin{cases}
((x,u),\varphi) &: \tilde {U_{0x}} \times S^1 \rightarrow \{((x,u),\varphi): x^2 (1+u^2)<1\} \times S^1 \\
((v,y),\varphi) &: \tilde {U_{0y}} \times S^1 \rightarrow \{((v,y),\varphi): y^2 (1+v^2)<1\} \times S^1
\end{cases}
\]
the coordinate change between these two charts being
\[
\begin{cases}
v&=\frac{1}{u} \\
y&=xu \\
\varphi &= \varphi.
\end{cases}
\]
All the other coordinate changes are obtained from the atlas $\mathcal{A}$ and the laws (projections obtained by restriction of $\sigma^{-1}$ to the base space)
\[
\begin{cases}
x&=x \\
y&=xu
\end{cases}
\]
and
\[
\begin{cases}
x&=yv \\
y&=y .
\end{cases}
\]
The manifold $\tilde {P_{\gamma_0}}$ has the following geometric description, see \cite{griffiths-harris} $\S 4.6$. Firstly, we observe that the blow up is locally isomorphic to the blow up of the product $\Delta = \mathbb{D} \times I$ where $I$ is an interval parametrizing locally the fiber $S^1$: still naming $\varphi$ the coordinate along $I$ we choose on $\Delta$ local coordinates $(x,y,\varphi)$, and we define
\[
\tilde \Delta \hookrightarrow \Delta \times \mathbb{RP}
\]
as the smooth manifold
\[
\tilde \Delta = \{((x,y,\varphi),l): l=[l_1 , l_2] , x l_2 = y l_1 \}
\]
covered by the two coordinate charts $\tilde {U_{0x}} \times S^1$, $\tilde {U_{0y}} \times S^1$, togheter with the smooth projection
\[
\sigma : \tilde \Delta \rightarrow \mathbb{D} \times S^1
\]
defined in $\tilde {U_{0x}} \times S^1$ by
\[
((x,u,\varphi),[x,y]) \rightarrow ((x,xu,\varphi))
\]
this map satisfying point $(ii)$ in (\ref{scoppio}). In other words, each point along $\gamma_0$ is blown up as a point of the corresponding fiber in the normal bundle to $\gamma_0$ in $P$. The change of coordinates between $(x,u,\varphi)$ and $(v,y,\varphi)$ coordinates proves that $\mathcal{E} = \{ x=0  \}$, resp. $\mathcal{E} = \{ y=0  \}$ in each of these coordinates, and $\mathcal{E} \simeq \mathbb{RP} \times S^1$, {\it i.e.} $\mathcal{E}$ is diffeomorphic to a Klein bottle. This remark ends the description of the first two terms in the triple $(\tilde {P_{\gamma_0}} , \sigma , \tilde {\mathcal{F}_0})$ which forms the blow up of $\xi_0$ along $\gamma_0$.

To lift $\mathcal{F}_0$ to a foliation by circles $\tilde {\mathcal{F}_0}$ of $\tilde {P_{\gamma_0}}$ one could use $\sigma^{-1}$ outside $\gamma_0$ and then try to extended smoothly the obtained foliation by circles on $\tilde {P_{\gamma_0}} - \mathcal{E}$ to the divisor $\mathcal{E}$: as direct as it is this approach is difficult to be carried on. We prefer to lift the infinitesimal law describing $\mathcal{F}_0$: of course, all that really matters is to do that in a neighbourhood of $\gamma_0$.

The differential equation (\ref{ultimo}) can be considered as an equation on the torus $\{ \vert w \vert =r \}$ whose integral curves are the leaves of $\mathcal{F}_0$, and we rewrite it as

\begin{equation}\label{sistema1}
\begin{cases}
\frac{dw}{dt}&=-iE w \\
\frac{dz_0}{dt}&=iz_0
\end{cases}
\end{equation}

Rigorously speaking, equation (\ref{sistema1}), which is a representation in local coordinates of $X_0$, makes sense if $\vert w \vert >0$ only, but it obviouly extend to $w=0$. The blow up transformation
\[
\begin{cases}
x=&x \\
y=&ux \\
\varphi =& \varphi
\end{cases}
\]
transforms the system (\ref{sistema1}), which is equivalent to
\begin{equation}\label{equivalente}
\begin{cases}
\dot x &=  Ey \\
\dot y &=  -Ex \\
\dot \varphi &= 1
\end{cases}
\end{equation}
to
\[
\left(
\begin{array}{c}
\dot x  \\
\dot u \\
\dot \varphi  
\end{array} \right)
=
\frac{\partial (x,u,\varphi)}{\partial (x,y,\varphi)}
\left(
\begin{array}{c}
\dot x  \\
\dot y \\
\dot \varphi 
\end{array} \right)
\]
and therefore computing in $(x,u,\varphi)$ coordinates
\[
\frac{\partial (x,u,\varphi)}{\partial (x,y,\varphi)} = 
\left(
\begin{array}{ccc}
1 & 0 & 0  \\
-\frac{u}{x} & \frac{1}{x} & 0  \\
0 & 0 & 1  
\end{array} \right)
\]
and
\[
\left(
\begin{array}{c}
\dot x  \\
\dot y \\
\dot \varphi
\end{array} \right)
=
\left(
\begin{array}{c}
Eux  \\
-E(1+u^2) \\
1
\end{array} \right)
\]
we get
\begin{equation}\label{lift}
\tilde {X_0} :
\begin{cases}
\dot x &= Exu \\
\dot u &= -E(1+u^2) \\
\dot \varphi &= 1.
\end{cases}
\end{equation}

This vector field, {\it a priori} defined in $\tilde {P_{\gamma_0}} - \mathcal{E}$, extends as a real analytic vector field to the divisor as
\begin{equation}\label{esteso}
\tilde {X_0}_{\vert \mathcal{E}} :
\begin{cases}
\dot x &= 0 \\
\dot u &= -E(1+u^2 )\\
\dot \varphi &= 1
\end{cases}
\end{equation}
and therefore define a foliation by circles $\tilde {\mathcal{F}_0}$ of $\tilde {P_{\gamma_0}}$ which extends to the divisor the lift through $\sigma^{-1}$ of $\mathcal{F}_0$. The restriction of $\tilde {\mathcal{F}_0}$ to the divisor, whose fundamental group is $\mathbb{Z} \oplus \mathbb{Z}$, is a curve of homotopy type $(2E,1)$: while a point of an orbit of $\tilde {X_0}_{\vert \mathcal{E}}$ turns $2E$-times around $\mathbb{RP}$ it winds once around the second factor of $\mathcal{E} \simeq \mathbb{RP} \times S^1$.

A comment is in order: we will always assume that $E \neq 0$, for in this case a global section to $\xi_0$ does exists, and all the statemets we whish to prove are trivially true.

Summarizing, we started by $X_0 : P \rightarrow TP$ defining a circle bundle $\xi_0$ and the relative foliation by circles $\mathcal{F}_0$ and we lifted these mathematical object to the blow up $(\tilde {P_{\gamma_0}},\sigma , \tilde {\mathcal{F}_0})$, $\tilde {X_0} : \tilde {P_{\gamma_0}} \rightarrow T \tilde {P_{\gamma_0}}$: then the couple $(\tilde {P_{\gamma_0}} , \tilde {\mathcal{F}_0})$ defines a Seifert $3$-manifold and a Seifert fibration, in the sense of the definition given by Scott in \cite{scott}: all the fibres of $\tilde {\mathcal{F}_0}$ in $\tilde {P_{\gamma_0}} - \mathcal{E}$ are regular, while a neighbourhood of the divisor $\mathcal{E}$ in $\tilde {P_{\gamma_0}}$ is a solid Klein bottle.

In the next proposition we show that the same extension to the divisor we proved for $\tilde {\mathcal{F}_0}$, $\tilde {X_0}$ holds for the perturbed lifted foliations $\tilde {\mathcal{F}_{\varepsilon}}$ and vector fields $\tilde {X_{\varepsilon}}$, and, most important, that the section defined in Lemma (\ref{localizzazione}) can be lifted and extended to a smooth global section of any $\tilde {X_{\varepsilon}}$ in $\tilde {P_{\gamma_0}}$, $\vert \varepsilon \vert$ sufficiently small.

\begin{proposition}\label{risolvi}
Let $\varepsilon \rightarrow X_{\varepsilon}$ as in Definition (\ref{perturbazione}) and let $\varepsilon \rightarrow \mathcal{F}_{\varepsilon}$ be the associated $1$-parameter family of foliations tangent to $\mathcal{F}_0$ at $\gamma_0$. Let $\Sigma = s (U_1)$ be the section over $U_1$ whose exsistence has been proved in statement $(i)$ of Lemma (\ref{localizzazione}). Then there exists $\overline \varepsilon >0$ such that  for $\vert \varepsilon \vert < \overline \varepsilon$ 
\begin{itemize}
\item [(i)] each $X_{\varepsilon} :P \rightarrow TP$ lifts to a smooth vector field
$$
\tilde X_{\varepsilon} : P_{\gamma_0} \rightarrow T P_{\gamma_0}
$$ 
such that
$$
\tilde {X_{\varepsilon}}_{\vert \mathcal{E}} = \tilde {X_{0}}_{\vert \mathcal{E}} \\
$$ 
\item [(ii)] the strict trasform $\hat \Sigma$ of $\Sigma$ extends to a global section for every $\tilde X_{\varepsilon}$ \\
\item [(iii)] each $\tilde X_{\varepsilon}$ defines a Seifert fibration of $P_{\gamma_0}$: all the fibers of this fibration in $P_{\gamma_0} - \mathcal{E}$ are regular, $\mathcal{E}$ is fibered by the integral curves of $\tilde X_{\varepsilon}$ in closed curves of homotopy type $(2E,1)$, $E$ Euler number of $\xi_0$, and a neighbourhood of $\mathcal{E}$ in $P_{\gamma_0}$ is diffeomorphic to a solid Klein bottle (see \cite{scott}).
\end{itemize}
\end{proposition}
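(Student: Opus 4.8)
The plan is to treat the three assertions in order, with (i) being essentially the analytic heart and (ii) the step I expect to require the most care. For (i), the key observation is that tangency of $\varepsilon \to \mathcal{F}_\varepsilon$ to $\mathcal{F}_0$ along $\gamma_0$ means, after the reparametrization making all periods equal to $2\pi$, that ${X_\varepsilon}_{|\gamma_0} = {X_0}_{|\gamma_0}$; moreover each $X_\varepsilon$ has $\gamma_0$ as a closed orbit with smooth period function, so Lemma \ref{sospensione} applies to each $X_\varepsilon$ on a small solid torus around $\gamma_0$ — but one must check the linearization integer $l_\varepsilon$ is locally constant in $\varepsilon$, hence equal to $-E$ for all small $\varepsilon$ by Lemma \ref{localizzazione}(iii). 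This gives that in suitable $(w,\varphi_0)$-coordinates near $\gamma_0$, the variational equation of each $X_\varepsilon$ along $\gamma_0$ is $\dot w = -iEw$, $\dot \varphi_0 = 1$, i.e. the \emph{same} linear part as $X_0$. Writing $X_\varepsilon = X_0^{\mathrm{lin}} + (\text{higher order in } w)$ in these coordinates and performing the blow-up substitution $x = x$, $y = ux$ exactly as in the derivation of (\ref{lift})--(\ref{esteso}), the higher-order terms (which vanish to order $\geq 2$ in $w = x(1+iu)\cdot(\text{unit})$, more precisely to order $\geq 2$ in $x$ after the substitution since $u$ stays bounded on a coordinate chart) get divided by at most one power of $x$ and hence still extend smoothly — in fact vanish — on $\mathcal{E} = \{x = 0\}$. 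Therefore $\tilde X_\varepsilon$ extends smoothly (real-analytically in the analytic case) across $\mathcal{E}$ and restricts on $\mathcal{E}$ to precisely $\tilde{X_0}_{|\mathcal{E}}$, proving (i). The mild technical point to be careful about: one must verify the higher-order remainder of $X_\varepsilon$ depends smoothly on $\varepsilon$ down to $\varepsilon = 0$ and is uniformly $O(|w|^2)$, so that the blow-up coordinates can be chosen $\varepsilon$-independent on a fixed neighborhood of $\mathcal{E}$.

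For (ii), I would first record that, by Lemma \ref{localizzazione}(iii)$'$ and the computation (\ref{esteso}), $\tilde X_0$ is transverse to the strict transform $\hat\Sigma$ everywhere: on $\mathcal{E}$ this is visible from $\dot u = -E(1+u^2) \neq 0$ crossing $\{u = 0\}$, and on $\tilde P_{\gamma_0} - \mathcal{E}$ it is the content of Lemma \ref{localizzazione}(i) together with the fact that (after the extension of $s$ to $M - \{q_0\}$, which is the routine extension mentioned in the text just before the blow-up discussion) the section $\Sigma = s(U_1)$, extended over $M - \{q_0\}$, has strict transform meeting every fiber of $\tilde{\mathcal{F}_0}$ transversally once. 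The point is then a compactness/openness argument: transversality of a vector field to a fixed compact hypersurface without boundary is an open condition in the $C^1$ topology, $\hat\Sigma$ is compact (it is the strict transform of the closure of $s(M-\{q_0\})$, a closed surface in $\tilde P_{\gamma_0}$ — indeed the strict transform compactifies $\Sigma$ by adding the circle $\hat\Sigma \cap \mathcal{E}$), and by (i) the family $\varepsilon \to \tilde X_\varepsilon$ is $C^1$-continuous at $\varepsilon = 0$ with $\tilde X_0 \pitchfork \hat\Sigma$. Hence there is $\overline\varepsilon > 0$ so that $\tilde X_\varepsilon \pitchfork \hat\Sigma$ for $|\varepsilon| < \overline\varepsilon$. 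That transversality at every point of a \emph{global} section forces $\hat\Sigma$ to meet \emph{every} orbit (and exactly once) uses that $\tilde{\mathcal{F}_\varepsilon}$ is a foliation by circles of the \emph{closed} manifold $\tilde P_{\gamma_0}$ and that the intersection number is a homotopy invariant equal to $1$ for $\varepsilon = 0$, hence $1$ for all small $\varepsilon$; together with transversality (no tangencies to absorb intersection points) this yields exactly one transverse intersection per leaf. This is the step I expect to be the main obstacle, because one must argue that $\hat\Sigma$ is genuinely a closed submanifold (no boundary, compact) and that ``global section for $\tilde X_0$'' is stable — the cleanest route is the intersection-number argument just sketched, but checking that no orbit of $\tilde X_\varepsilon$ becomes tangent to, or misses, $\hat\Sigma$ requires the uniform transversality from (i) on a neighborhood of $\mathcal{E}$ plus ordinary structural stability of the Seifert fibration away from $\mathcal{E}$.

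Finally (iii) is then immediate from (i) and (ii): by (i) the orbits of $\tilde X_\varepsilon$ on $\mathcal{E}$ coincide with those of $\tilde X_0$, which by (\ref{esteso}) and the homotopy computation following it are closed curves of type $(2E,1)$ on the Klein bottle $\mathcal{E}$; a neighborhood of $\mathcal{E}$ in $\tilde P_{\gamma_0}$ is the same solid Klein bottle as for $\tilde{\mathcal{F}_0}$ since the two foliations agree to first order there; and the existence of the global section $\hat\Sigma$ from (ii), meeting every leaf once transversally, exhibits $\tilde{\mathcal{F}_\varepsilon}$ as a Seifert fibration of $\tilde P_{\gamma_0}$ with all fibers in $\tilde P_{\gamma_0} - \mathcal{E}$ regular (the only non-generic fiber being along $\mathcal{E}$, reflecting the solid-Klein-bottle local model in Scott's sense). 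One closes by invoking Scott \cite{scott} for the identification of this data with a genuine Seifert fibration.
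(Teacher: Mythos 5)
The genuine gap is in your part (ii), exactly at the point you flag as ``the main obstacle'' and then do not close. Your compactness/openness argument needs $\hat\Sigma$ to be a \emph{closed} (compact, boundaryless) hypersurface of $\tilde P_{\gamma_0}$, and you obtain this by asserting that $s$ extends over $M-\{q_0\}$ and that the strict transform ``compactifies $\Sigma$ by adding the circle $\hat\Sigma\cap\mathcal E$''. But the smoothness of that closure near the divisor is precisely what has to be proved --- the paper explicitly abandons the extension-to-$M-\{q_0\}$ route for this reason. Its substitute is an explicit local model: $\Sigma$ is characterized as an integral surface of the flat connection $\eta=-d\varphi_1$ over $U_1$; transporting $\eta$ to $U_0$ via the transition function $g_{01}=(w/\vert w\vert)^E$ gives $\pi^*A_0=E\,d\theta$, and the pullback $\tilde\eta=\sigma^*(d\varphi+E\,\pi^*d\theta)=d\varphi+E\,d\arctan u$ of (\ref{forma}) is a smooth nonsingular $1$-form up to $\mathcal E$ whose integral surfaces $\varphi+E\arctan u=\text{const}$ provide the closed extension of $\hat\Sigma$; the exact identity $\tilde\eta\,\tilde X_\varepsilon\equiv -1$ on $\mathcal E$ (a consequence of (i)) then gives transversality near the divisor, with ordinary compactness handling the rest. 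Without some such computation your openness argument has nothing compact to act on. Relatedly, your claim that $\hat\Sigma$ meets every leaf ``exactly once'' is false: a regular leaf meets $\hat\Sigma$ $2E$ times (already in the Hopf example, $E=1$, the quasi-section is a two-sheeted branched cover of the base), which is why the Poincar\'e map in the proof of Theorem (\ref{parallele}) has period $2E$. Statement (ii) only requires transversality to every leaf, so this does not derail the structure, but the intersection-number bookkeeping must be corrected.

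Your part (i) follows the paper's strategy in substance (the paper lifts the flow $h_t$ and disposes of the remainder with a division lemma for $G(x,xu,\varphi)/x$ when $G=\mathcal O(\vert w\vert^2)$; your direct substitution into the vector field needs the identical estimate and works), but one point needs more than you give: Lemma (\ref{sospensione}) and the local constancy of the integer invariant only show that the linearization of $X_\varepsilon$ along $\gamma_0$ is \emph{conjugate} to $\dot w=-iEw$ by an $\varepsilon$-dependent change of coordinates. To conclude $\tilde{X_\varepsilon}_{\vert\mathcal E}=\tilde{X_0}_{\vert\mathcal E}$ you need the remainder to be $\mathcal O(\vert w\vert^2)$ in the \emph{fixed} coordinates defining the blow-up charts, i.e.\ the variational equations of $X_\varepsilon$ and $X_0$ along $\gamma_0$ must literally coincide there (this is (\ref{infinitesimale})--(\ref{piatta}) in the paper); ``suitable coordinates'' depending on $\varepsilon$ would change the blown-up manifold, so this normalization, uniform in $\varepsilon$, deserves its own short argument. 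Part (iii) is indeed immediate once (i) and (ii) are in place.
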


\begin{remark} 
Statement $(iii)$ of the above proposition and the blow up construction described above, suggest the following statement, which is surely known to the experts and whose proof follows by that of the above proposition

{\it 
Each circle bundle, whose total space $P$ is a closed oriented $3$-manifold and whose base space is a closed oriented surface, can be blown up along a fiber $\gamma_0$ to a Seifert manifold $P_{\gamma_0}$. The foliation of $P$ defined by the fibers of the bundle is lifted to a Seifert fibration of $P_{\gamma_0}$ whose fibers are regular except those on the divisor $\mathcal{E}$, a neighbourhood of the divisor being a solid Klein bottle and the foliation along the divisor being that in closed curves of homotopy type $(2E,1)$, $E$ Euler number of the circle bundle. This Seifert fibration has a smooth global section.
}
\end{remark}
\begin{proof} (of Proposition (\ref{risolvi}))

Firstly, we whish to recall that we will always disregard the case $E=0$: in this case the statements of the proposition are easily seen to be trivially true, because a circle bundle with $0$ Euler number has a global section.

The first iussue to tackle is the lift of $X_{\varepsilon}$, and therefore of $\mathcal{F}_{\varepsilon}$, to the blown up manifold. Our starting point is equation (\ref{equivalente}) which leads to the expression in local coordinates for $X_{\varepsilon}$
\begin{equation}\label{infinitesimale}
{X_{\varepsilon}} :
\left(
\begin{array}{c}
\dot x  \\
\dot y \\
\dot \varphi
\end{array} \right)
=
\left(
\begin{array}{ccc}
0&-1&0 \\
1&0&0 \\
0&0&E
\end{array}
\right)
\left(
\begin{array}{c}
x \\
y \\
\varphi
\end{array}
\right)
+ \underline G (x,y,\varphi , \varepsilon)
\end{equation}
where $G$ is smooth, $2 \pi$-periodic in $\varphi$, satisfying $G=\mathcal{O}(\vert w \vert^2)$ uniformly with respect to the rest of variables when $\vert \varepsilon \vert < \overline \varepsilon$. Moreover the condition of tangency to $\mathcal{F}_0$ and the normalization of the periods implies
\begin{equation}\label{piatta}
G(0,0,\varphi , \varepsilon) \equiv 0 .
\end{equation}
Therefore the flow of $X_{\varepsilon}$ is of the form
\begin{equation}\label{riflusso}
h_t (x,y,\varphi, \varepsilon) = 
\left(
\begin{array}{c}
x \\
y \\
\varphi
\end{array}
\right)
+ t X_0 (x,y,\varphi ) + \underline F (t,x,y,\varphi , \varepsilon)
\end{equation}
where $\underline F$ is smooth, $2 \pi$-periodic with respect to $\varphi$, $\underline F = \mathcal{O}(t^2)$, $\underline F = o(1)$ with respect to $\varepsilon$ and from (\ref{piatta})
\begin{equation}\label{fpiatta}
\underline F (t,0,0,\varphi , \varepsilon) \equiv 0.
\end{equation}
In order to lift $X_{\varepsilon}$ to a vector field $\tilde {X_{\varepsilon}}$ we will lift the flow $h_t :P \rightarrow P$ to a flow $\tilde {h_t} : \tilde {P_{\gamma_0}} \rightarrow \tilde {P_{\gamma_0}}$ according to
\begin{equation}\label{sollevamento}
\sigma \circ \tilde {h_t} = h_t \circ \sigma.
\end{equation}
Lifting arguments of this type are common in algebraic or analytic geometry, where powerful theorem of extension of analytic objects are avaliable: for this reason we leave the (real) analytic case aside and we focus on the case of $C^k$-regularity, $k \geq 2$. In this case questions of the type in (\ref{sollevamento}) are considered in the case of blow up of a point {\it e.g.} in \cite{takens} or in Dumortier's article in \cite{dumortier}. The proof of existence of smooth ${h_t}  :\tilde {P_{\gamma_0}} \rightarrow \tilde {P_{\gamma_0}}$ satisfying (\ref{sollevamento}) could be obtained suitably adapting these results, but a direct proof of it is sufficiently elementary and useful to be presented here. In (\ref{sollevamento}) we freeze $t$-variable and therefore we will prove that
\begin{equation}\label{sollevamento1}
\sigma \circ \tilde {h} = h \circ \sigma.
\end{equation}
As
\[
\sigma \circ \tilde h =(\tilde h_1 , \tilde h_1 \tilde h_2 , \tilde h_3)
\]
(\ref{sollevamento}) reduces to
\begin{equation}\label{zero}
\begin{cases}
\tilde h_1 (x,u,\varphi) &= h_1(x,xu,\varphi) \\
\tilde h_2 (x,u,\varphi) &= \frac{h_2 (x,xu,\varphi)}{h_1 (x,xu,\varphi)} \\
\tilde h_3 (x,u,\varphi) &= h_3 (x,xu,\varphi)
\end{cases}
\end{equation}
so we are left to prove the $C^{k-1}$ extension of the definition in the second equation in (\ref{zero}) when $x \rightarrow 0$. Using that
\[
\begin{cases}
h_1 (x,y,\varphi) &= x + \mathcal{O}(\vert w \vert^2 , \varphi) \\
h_2 (x,y,\varphi) &= y + \mathcal{O}(\vert w \vert^2 , \varphi) \\
h_3 (x,y,\varphi) &= \varphi + \mathcal{O}(\vert w \vert^2 , \varphi)
\end{cases}
\]
and Taylor's formula with Peano's form of the remainder term, we get that the second equation in (\ref{zero}) becomes
\begin{equation}\label{peano}
\tilde h_2 (x,u,\varphi) = \frac{u + x g_2(u, \varphi) +  \cdots +x^{k-1} g_k (u,\varphi) + \frac{G_k (x,xu,\varphi)}{x}}{1 + x f_2 (u,\varphi) + \cdots + x^{k-1} f_k (u,\varphi) + \frac{F_k (x,xu,\varphi)}{x}}
\end{equation} 
where $g_j , f_j$ are polynomials in $u$ of degree at most $j$ and coefficients smoothly depending on $\varphi$, and $G_k , F_k$ are $C^k$-functions, $k \geq 2$ such that $G_k , F_k = \mathcal{O}(\vert w \vert^k)$ uniformly with respect to $\varphi$. The fact that the function in (\ref{peano}) extends as a $C^{k-1}$-function $(x,u,\varphi) \rightarrow u + x (\cdots)$ when $x=0$, {\it i.e.} along the divisor, follows from the following simple
\begin{lemma}
If $G(x,y,\varphi) = \mathcal{O}(\vert w \vert^k)$, and $G$ is  of class $C^k$, then the function $\frac{G(x,xu,\varphi)}{x}$ extends as a function of class $C^{k-1}$ to $\{ x=0 \}$, and is $k-1$-flat at $\{ x=0 \}$.
\end{lemma}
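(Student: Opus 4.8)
The plan is to obtain everything from a single application of Hadamard's lemma together with an elementary size estimate. Write $\Phi(x,u,\varphi)=(x,xu,\varphi)$; this polynomial (hence $C^{\infty}$) map carries the blow-up chart $\{x^{2}(1+u^{2})<\delta^{2}\}$ into the domain $\{|w|<\delta\}$ of $G$, so $F:=G\circ\Phi$, i.e. $F(x,u,\varphi)=G(x,xu,\varphi)$, is of class $C^{k}$ there. Moreover $F(0,u,\varphi)=G(0,0,\varphi)\equiv0$, since $G=\mathcal{O}(|w|^{k})$ forces in particular $G(0,0,\varphi)=0$.

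For the $C^{k-1}$-extension I would invoke Hadamard's lemma in the $x$-variable (for fixed $(u,\varphi)$ the $x$-slice of the chart is an interval about the origin): $F(x,u,\varphi)=x\,F_{1}(x,u,\varphi)$ with $F_{1}(x,u,\varphi):=\int_{0}^{1}(\partial_{x}F)(tx,u,\varphi)\,dt$. Since $\partial_{x}F=(\partial_{x}G)\circ\Phi+u\,(\partial_{y}G)\circ\Phi$, and the first-order partials $\partial_{x}G,\partial_{y}G$ are of class $C^{k-1}$, the integrand is $C^{k-1}$ jointly in $(t,x,u,\varphi)$; differentiation under the integral sign then gives $F_{1}\in C^{k-1}$ on the chart. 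As $F_{1}=F/x=G(x,xu,\varphi)/x$ off $\{x=0\}$, this $F_{1}$ is precisely the asserted $C^{k-1}$-extension of $G(x,xu,\varphi)/x$ across the divisor.

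For the flatness I would use the bound $|G(x,y,\varphi)|\le C|w|^{k}$ for $|w|$ small: substituting $y=xu$ gives $|F(x,u,\varphi)|\le C\big(x^{2}(1+u^{2})\big)^{k/2}=C|x|^{k}(1+u^{2})^{k/2}$, hence $|F_{1}(x,u,\varphi)|\le C|x|^{k-1}(1+u^{2})^{k/2}$ for $x\neq0$ and, by continuity of $F_{1}$, for all $x$. Near a point of $\{x=0\}$ the factor $(1+u^{2})^{k/2}$ is bounded, so there $F_{1}=\mathcal{O}(|x|^{k-1})$; since $F_{1}\in C^{k-1}$, Taylor's formula in $x$ forces $\partial_{x}^{j}F_{1}\equiv0$ on $\{x=0\}$ for $0\le j\le k-2$, and differentiating these identities in $u$ and $\varphi$ yields $\partial^{\alpha}F_{1}|_{\{x=0\}}=0$ for every multi-index with $|\alpha|\le k-2$. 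This is exactly the statement that $F_{1}$ is $(k-1)$-flat along the divisor in the sense used in the paper.

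As for difficulty: there is essentially no real obstacle here --- the one factor of $x$ we divide by is matched by the one factor of $x$ that Hadamard's lemma extracts, so exactly one order of differentiability is lost, and the hypothesis $G=\mathcal{O}(|w|^{k})$ enters only through the crude size bound above, not through the regularity argument. The only point to keep an eye on is that the blow-up chart is non-compact in the $u$-direction as $x\to0$; but since ``$C^{k-1}$'' and ``$(k-1)$-flat along $\{x=0\}$'' are local statements along the divisor, it suffices to argue on neighbourhoods of points of $\{x=0\}$ on which $u$ stays bounded, where all the estimates above hold verbatim.
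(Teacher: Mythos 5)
Your proof is correct, but it takes a genuinely different route from the paper's. The paper works directly on the quotient $H(x,u,\varphi)=G(x,xu,\varphi)/x$: it computes $\partial_x H=\frac{G_x+uG_y}{x}-\frac{G}{x^2}$, uses the Peano--Taylor information $\partial^l G=\mathcal{O}(\vert w\vert^{k-l})$ to show each such ratio tends to $0$ as $x\to 0$, and asserts that $\partial_x^j H$ decomposes into ratios of the same type for general $j$, reading off the regularity of the extension from the continuous extendability of these derivatives. You instead extract the factor of $x$ \emph{before} differentiating, via Hadamard's lemma: $G(x,xu,\varphi)=x\int_0^1\bigl(G_x+uG_y\bigr)(tx,txu,\varphi)\,dt$, so the quotient is manifestly $C^{k-1}$ (only $G(0,0,\varphi)\equiv 0$ and $G\in C^k$ are used here), and the hypothesis $G=\mathcal{O}(\vert w\vert^k)$ enters only through the pointwise bound $\vert F_1\vert\le C\vert x\vert^{k-1}$ near the divisor, which together with Taylor's formula kills the jet of order $k-2$ along $\{x=0\}$. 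This buys a cleaner separation of the regularity claim from the flatness claim, and it sidesteps the bookkeeping of which derivatives of $G$ occur in $\partial_x^j H$ and at what rate they decay, as well as the tacit step (needed in the paper's version) that continuous extendability of the derivatives of $H$ across $\{x=0\}$ implies differentiability of the extension. One remark on the flatness: under the stated hypothesis $G=\mathcal{O}(\vert w\vert^k)$, the example $G=x^k$ gives $F_1=x^{k-1}$, whose $(k-1)$-st $x$-derivative at $x=0$ equals $(k-1)!\neq 0$; so the vanishing of all derivatives of order $\le k-2$ (equivalently $F_1=\mathcal{O}(\vert x\vert^{k-1})$), which is exactly what you prove, is the sharp form of the conclusion, and it is all that is needed where the lemma is invoked in (\ref{peano}).
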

\begin{proof}
Continuos extension of $H(x,u,\varphi)=\frac{G(x,xu,\varphi)}{x}$ to $\{ x=0 \}$ as an identically $0$ function is obvious. The worst (in the sense of the behaviour of the limit as $x \rightarrow 0$) $j$-th derivatives of $H$ is $\frac{\partial^j H}{\partial x^j}$, and we focus on them. When $j=1$
\begin{equation}\label{base}
\frac{\partial H}{\partial x} = \frac{G_x + G_y u}{x} - \frac{G}{x^2}.
\end{equation}
The term $\frac{G}{x^2} = o(1)$ for $k \geq 2$, moreover from uniqueness of the Taylor formula with Peano's remainder, $G_x , G_y = \mathcal{O}(\vert w \vert^{k-1})$ are $C^{k-1}$ functions, hence also  $\frac{G_x + G_y u}{x} = o(1)$ hence proving that $\frac{\partial H}{\partial x}$ smoothly extends to $0$ as $x \rightarrow 0$. In general $\frac{\partial^j H}{\partial x^j}$ decomposes in a sum of ratios having at numerator a function containig derivatives of $G$ of order $l=0, \ldots , k-1$ which is, by the hypothesis on the asymptotic behaviour of $G$ and Taylor's  formula for its derivatives, of type $o(x^{k-l})$, and a denominator which is $x^{k-l}$, hence each of these ratios is $o(1)$.
\end{proof}
We come back now to the proof of Proposition (\ref{risolvi}): we have just proved that $\tilde h_t (x,u,\varphi)$ has a smooth extension up to the divisor $\mathcal{E}$ having, in the considered chart, equation $\{ x = 0 \}$. Moreover is easy to check that, as a consequence of (\ref{fpiatta}), this extension on $\mathcal{E}$ coincides with the restriction to the divisor of the flow of $\tilde X_{\varepsilon}$: therefore $\tilde h_t (x,u,\varphi)$ is a $C^{k-1}$-regular $1$-parameter group of diffeomorphisms of $\tilde {P_{\gamma_0}}$ and therefore defines a $C^{k-1}$ vector field $\tilde {X_{\varepsilon}}$, which coincides on the divisor with $\tilde {X_0}$.

To end the proof we only have to show that $\hat \Sigma = \sigma^{-1}( \Sigma)$, $\Sigma = s(U_1)$, has a smooth exstension up to the divisor and it is transverse to any $\tilde {\mathcal{F}_{\varepsilon}}$, $\vert \varepsilon \vert$ sufficiently small.

Firstly, we characterize $\Sigma$ as an integral surface of an integrable connection $\eta$, singular on $\gamma_0$. Following \cite{chern}, a connection $1$-form in the restriction of $\xi_0$ to the total space $P- \{ \gamma_0 \}$ is given by
$$
\eta = - d \varphi_j + \pi^* A_{j}
$$
where $z_{j}=e^{i \varphi_j}$ is a local coordinate along the fiber and $A_j$ is a gauge potential in $U_j$, $j=0,1$. To represent the section $\Sigma$ in $U_1$ we can take the equation
\begin{equation}\label{equazione}
\eta = 0
\end{equation}
where
\begin{equation}\label{connessione}
\eta =-d \varphi_1
\end{equation}
is the connection corresponding to the gauge potential in $U_1$
$$
A_1 \equiv 0.
$$

The expression of the connection $1$-form (\ref{connessione}) in $U_0$ is then obtained from the transformation law of gauge potentials, {\it cfr.} \cite{chern}:

$$
\pi^* A_0 = i \, d \, log  \,g_{01} = E \, d \theta
$$
where writing $w= x +iy$
we have that $\theta = \arg w = \arctan \frac{y}{x} + \, constant$ or $\theta = \arctan \frac{x}{y} + constant$ and therefore
$$
d \theta = -\frac{y}{x^2 + y^2} dx + \frac{x}{x^2 + y^2} dy.
$$
To get in a neighbourhood of the divisor $\mathcal{E}$ an equation defining $\hat \Sigma$, hence showing that $\hat \Sigma$ smoothly extend up to the divisor, we must lift the equation $\eta =0$ to the blown up space, {\it i.e.} we must write it in coordinates $(x,u,\theta)$ where
\[
\begin{cases}
x & = x \\
y & = ux \\
\theta & = \varepsilon
\end{cases}
\]
and therefore
\[
\pi^* d \theta = -\frac{ux}{x^2 (1+u^2)} dx + \frac{x}{x^2 (1+u^2)} (u dx + x du) = \frac{du}{1+u^2}=d \arctan u
\]
hence equation $\eta =0$ lifts to the smooth $1$-form on $\tilde {P_{\gamma_0}}$
\begin{equation}\label{forma}
\tilde \eta = \sigma^* (d \varphi + E \pi^* d \theta) = d \varphi + E d \arctan u =0
\end{equation}
whose integral manifolds
$$
- \varphi =E\arctan u + \, constant
$$
smoothly extend up to the divisor.

Moreover this smooth manifold $\hat \Sigma$ is transverse to $\tilde X_0$: in fact, $p \rightarrow \tilde \eta \tilde{X_0} (p)$ is smooth on $\tilde {P_{\gamma_0}}$ and
\[
\tilde \eta \tilde X_0  = \eta  X_0   \equiv -1 
\]
on $\tilde {P_{\gamma_0}} - \mathcal{E}$, therefore $\tilde \eta \tilde X_0 \equiv -1$ on $\mathcal{E}$, too.

To end the proof we must prove that, for $\vert \varepsilon \vert$ sufficiently small, $\hat \Sigma$ is transverse to any $\tilde {X_{\varepsilon}}$. This is proved recalling that all $\tilde {X_{\varepsilon}}$'s have the same extension (\ref{esteso}) up to the divisor $\mathcal{E}$ of $X_0$, hence

\[
\tilde \eta \tilde {X_{\varepsilon}} \equiv -1
\]
on $\mathcal{E}$.

Then, from openness of transversality condition, for a fixed open neigbourhood $U$ of $\mathcal{E}$ in $\tilde {P_{\gamma_0}}$ there exists $\overline \varepsilon >0$ such that for $\vert \varepsilon \vert < \overline \varepsilon $ every $\tilde {X_{\varepsilon}}$ is transverse to $\hat \Sigma \cap U$. Up to reducing $\overline \varepsilon$ we can get that $\tilde {X_{\varepsilon}}$ is transverse to the compact manifold $\hat \Sigma \cap (\tilde {P_{\gamma_0}} -U)$, too, and this ends the proof.
\end{proof}
We can now conclude the proof of Theorem (\ref{parallele})
\begin{proof} (of Theorem (\ref{parallele}))
From Proposition (\ref{risolvi}), statement $(ii)$, $\hat \Sigma$ is transverse to $\tilde {X_{\varepsilon}}$ for any $\varepsilon$, $\vert \varepsilon \vert < \overline \varepsilon$. Therefore the Poincar\'e maps relative to the $\tilde {X_{\varepsilon}}$'s
$$
\mathcal{P}_{\varepsilon} : \hat \Sigma \rightarrow \hat \Sigma
$$
are pointwise periodic, and the map
\[
\begin{cases}
\mathcal{P} & : \hat \Sigma \times ]-\overline \varepsilon , \overline \varepsilon[ \rightarrow \hat \Sigma \times ]-\overline \varepsilon , \overline \varepsilon[ \\
\mathcal{P}(p,\varepsilon) & =(\mathcal{P}_{\varepsilon}(p) , \varepsilon)
\end{cases}
\]
is pointwise periodic, too. From a theorem by D. Montgomery \cite{montgomery} $\S V$ there exists $N \in \mathbb{N}$ such that
\begin{equation}\label{periodica}
\begin{cases}
\mathcal{P}^{(k)} & \neq \Id \, k=1, \dots , N-1 \\
\mathcal{P}^{(N)} = \Id 
\end{cases}
\end{equation}
and it is easy to see that $N=2E$.

Let $\phi^t_{\varepsilon}(\cdot)$ be the flow of $\tilde {X_{\varepsilon}}$ and let

\[
T_{\varepsilon} : P \rightarrow \mathbb{R}^+
\]
be defined as
$$
T_{\varepsilon} (p) =\tau (\varepsilon , p ) + \sum_{i=1}^{2E-1} \tau_i (\varepsilon , p)
$$
where $\tau_i (\varepsilon , p) \in ]0, 2 \pi ]$ is the positive number such that $\phi_{\varepsilon}^{t}(\phi_{\varepsilon}^{\tau_{i-1} (\varepsilon , p)}) \notin \hat \Sigma$, $\phi_{\varepsilon}^{\tau_{i}(\varepsilon , p)}(\phi_{\varepsilon}^{\tau_{i-1} (\varepsilon , p)}) \in \hat \Sigma$, $i=1, \ldots ,2E-1$, $\tau_0 (\varepsilon , p) =0$ and $\phi_{\varepsilon}^{\tau (\varepsilon , p )} (\phi_{\varepsilon}^{\sum_{i=1}^{2E-1} \tau_i (\varepsilon , p)}(p)) = p$. In other words, $T_{\varepsilon} (p)$ is the (minimal )period function of the flow of $\tilde {X_{\varepsilon}}$ for points $p \notin \mathcal{E}$, while is $2E$-times the minimal period for points $p \in \mathcal{E}$: the above analytic definition and the Implicit Function Theorem implies that $T_{\varepsilon}(\cdot)$ is smooth so
$$
\hat X_{\varepsilon} = \frac{2 \pi}{T_{\varepsilon}} \tilde {X_{\varepsilon}}
$$
is smooth, too. We still name $\tilde {X_{\varepsilon}}$ these reparametrized vector fields, and note that they are isochronous of common period $2 \pi$. The flows of the $\tilde {X}_{\varepsilon}$'s blow down to the flows of the $X_{\varepsilon}$'s, and therefore, still denoting $\phi^t_{\varepsilon}(p)$ the flow of $X_{\varepsilon}$ with initial datum $p \in P$, we can define the smooth functions
\begin{equation}\label{diffeo}
\varphi_{\varepsilon} (p)=\phi_0^{\tau (\varepsilon , p)}(p(\varepsilon)).
\end{equation}
Each of these map is a smooth diffeomorphism, extending smoothly to the identity on $\gamma_0$, which maps each integral curve of $X_{\varepsilon}$ through $p \in \Sigma$ to the integral curve of $X_0$ through $p$. To prove that $\varphi_{\varepsilon}$ is a smooth isomorphism of circle bundle we define the $S^1$-actions
\begin{equation}\label{azione}
\begin{cases}
*_{\varepsilon} & : S^1 \times P \rightarrow P \\
\theta *_{\varepsilon} p & = \phi_{\varepsilon}^{\theta}
\end{cases}
\end{equation}
$\theta \in \frac{\mathbb{R}}{2 \pi \mathbb{Z}} \simeq S^1$.
The group property of the flow of an autonomous differential equation implies
$$
\tau (\varepsilon , \phi_{\varepsilon}^{\theta}(p)) = \tau (\varepsilon , p) + \theta
$$
therefore
$$
\varphi_{\varepsilon} (\theta *_{\varepsilon}p)=\varphi_{\varepsilon}(\phi_{\varepsilon}^{\theta}(p))=\phi_0^{\tau (\varepsilon , p) + \theta}(p(\varepsilon)) = \phi_0^{\theta} \circ \varphi_{\varepsilon} (p)= \theta *_0 \varphi_{\varepsilon} (p)
$$
which proves that (\ref{azione}) defines a circle bundle $\xi_{\varepsilon}$ whose fibers are the leaves of $\mathcal{F}_{\varepsilon}$ and $\varphi_{\varepsilon} : \xi_{\varepsilon} \rightarrow \xi_0$ is a bundle isomorphism.

\end{proof}
We can prove now Corollary (\ref{analitico})
\begin{proof}

Let $\mathcal{U}= \{ U_{\alpha}'\}$ an open cover of $M$ and let
$$
\varphi_{\alpha} : \pi^{-1}(U_{\alpha}' \times S^1) \rightarrow U_{\alpha}' \times S^1
$$
be trivializing diffeomorphisms of the bundle $\xi_0$.

Let $\{ U_{\alpha} \}_{\alpha}$ be a refinement of $\{ U_{\alpha}' \}_{\alpha}$ and let
$$
D_{\alpha} = \varphi^{-1}_{\alpha} (U_{\alpha}\times\{ 1 \})
$$
and
$$
D_{\alpha}' = \varphi^{-1}_{\alpha} (U_{\alpha}' \times\{ 1 \}).
$$
For $p \in P$
$$
<X_0 (p) > + T_p D_{\alpha} = T_p P
$$
and openess of this property and closeness of $P$ imply that
\begin{equation}\label{trasversalita}
<X_{\varepsilon} (p) > + T_p D_{\alpha}' = T_p P
\end{equation}
holds for $\vert \varepsilon \vert < \overline \varepsilon $, for any $p \in P$ and for $\overline \varepsilon$ positive and sufficiently small. Denoting as usual $\phi^t_{\varepsilon} (\cdot)$ the flow of $X_{\varepsilon}$, using the transversality property (\ref{trasversalita}), the Implicit Function Theorem and a careful choice of the coverings $\{ U_{\alpha} \}_{\alpha}$, $\{ U_{\alpha}' \}_{\alpha}$, we get the existence of smooth first return times
$$
t_{\alpha , \varepsilon} : D_{\alpha} \rightarrow \mathbb{R}^+
$$
such that
$\phi^t_{\varepsilon}(p) \notin \D_{\alpha}$ if $0< t < t_{\alpha , \varepsilon} (p) $ and $\phi^{t_{\alpha , \varepsilon}}_{\varepsilon}(p) \in \D_{\alpha}$. Uniqueness of the implicit function implies that $t_{\alpha , \varepsilon} (p) = t_{\beta , \varepsilon}(p)$ if $p \in U_{\alpha} \cap U_{\beta}$. Let
$$
\mathcal{P}_{\alpha , \varepsilon} : \D_{\alpha} \rightarrow D_{\alpha}'
$$
be the Poincar\'e map, where
\begin{equation}\label{poincare}
\mathcal{P}_{\alpha , \varepsilon} (p)=\phi^{t_{\alpha , \varepsilon}(p)}{\varepsilon}(p).
\end{equation}
Of course
$$
\mathcal{P}_{\alpha , 0} = identity \, on \, D_{\alpha}'.
$$
Let
$$
Fix \, \mathcal{P}_{\alpha , \varepsilon } = \, set \, of \, fixed \, points \, of \, \mathcal{P}_{\alpha , \varepsilon }
$$
and define for any $\varepsilon$, $\vert \varepsilon \vert < \overline \varepsilon$ the set
$$
\mathcal{Z}_{\varepsilon} = \cup_{\alpha} \{ \phi^t_{\varepsilon} (p): p\in Fix \, \mathcal{P}_{\alpha , \varepsilon }, 0\leq t \leq t_{\alpha , \varepsilon} \}.
$$
If $P$, $\varepsilon \rightarrow X_{\varepsilon}$ are real analytic, $\mathcal{Z}_{\varepsilon}$ is real analytic, too, and
$$
\mathcal{Z}=\cup_{\vert \varepsilon \vert < \overline \varepsilon} \mathcal{Z}_{\varepsilon}
$$
is real analytic as well. Of course, $\mathcal{Z}_0 = P$.

From Theorem (\ref{seifert}) $\mathcal{Z}_{\varepsilon} \neq \emptyset$ for every $\varepsilon$. From a theorem of Bruhat and H. Cartan \cite{bc} there exists a real analytic curve $\varepsilon \rightarrow \gamma_{\varepsilon}$, {\it i.e.} a real analytic curve of Seifert's leaves: this remark ends the proof.
\end{proof}
\begin{remark}
We whish to end this section with an observation concerning Theorem (\ref{parallele}), whose statement implies conjugacy of two foliations when they are sufficiently $C^1$-close, one of them has leaves which are the fibers of a circle bundle, and they are tangent along a fiber. If the perturbation $\varepsilon \rightarrow \mathcal{F}_{\varepsilon}$ of the foliation $\mathcal{F}_0$ whose leaves are the fiber of the circle bundle $\xi_0$ was {\it a priori} known to be made by foliations by circles generated by circle bundles, then Theorem (\ref{parallele}) would be rather obvious and consequence of the fact that circle bundles are classified by one integer-valued invariant, the Euler number, {\it once a open covering of the base space has been fixed}, a condition that by Lemma (\ref{localizzazione}) is equivalent to tangency at $\gamma_0$ of the $\mathcal{F}_{\varepsilon}$. It is perhaps less obvious that the rigidity property in Theorem (\ref{parallele}) still holds true without imposing {\it a priori} that the foliations $\mathcal{F}_{\varepsilon}$ arise from circle bundles.

\end{remark}

\section{Final remarks and an example (by Thurston) in dimension $4$}
This final section is devoted to some remarks on the relevant hypotheses we made in Theorem (\ref{parallele}) and which are listed below (we do not consider here orientability assumptions):

\begin{itemize}\label{lista}
\item [(i)] existence of a smooth curve of Seifert's leaves for the perturbation $\varepsilon \rightarrow \mathcal{F}_{\varepsilon}$ \\
\item [(ii)] the fact that the unperturbed foliation $\mathcal{F}_0$ is defined by the fibers of a circle bundle \\
\item [(iii)] compactness of the fibers of the unperturbed bundle \\
\item [(iv)] the fact that $dim \, P =3$ (dimensionality hypothesis).
\end{itemize}

A part from hypothesis $(iv)$, which is considered in some details, we limit ourselves to brief comments concerning $(i), (ii), (iii)$.

Hypotheses $(ii)$, $(iii)$ can be weakened, {\it e.g.} one can suppose that the unperturbed foliation is a Seifert fibration of Seifert manifolds, and still a result similar to Theorem (\ref{parallele}) holds: we hope to deal with this subject in a nearly future.

Hypotheses $(i)$, $(iv)$ are related through Corollary (\ref{analitico}). Some generalizations obtained weakening $(i)$ are still possible, and left for future work, too, but basically the main question concerning $(i)$ is the existence of an example of a $1$-parameter family $\varepsilon \rightarrow \mathcal{F}_{\varepsilon}$ of foliations by circle satifying the Seifert Stability Theorem which does not admit a smooth curve of Seifert's leaves: if such an example exists, its dynamics should be very interesting. Corollary (\ref{analitico}) shows that such example cannot exist in the real analytic setting and in dimension $3$.

The dimensionality hypothesis $(iv)$ is necessary for Theorem (\ref{parallele}) to hold. This is proved by a celebrated example by W. Thurston, originally presented by D. Sullivan in \cite{sullivan} and later described by Godbillon \cite{godbillon} and D.B.A. Epstein in Appendix $1$ in \cite{besse}. Such example shows that there exists a $1$-parameter family $\varepsilon \rightarrow \mathcal{F}_{\varepsilon}$ of foliations by circles of a $4$-manifold $P$, deforming a circle bundle $\xi_0$ with total space $P$, which do not satisfy the rigidity property proved in Theorem (\ref{main}). We slightly modify such example in order to prove that question $(I)$ posed at the end of the introduction of this article, which has negative answer in the case of $1$-parameter families of oscillators $(P, \mathcal{F}_{\varepsilon})$ when $dim P=3$, has instead positive answer when $dim P=4$: we dare to add our version of Thurston's example to those quoted above just to explain this point. We briefly recall $(I)$: it asks for the existence of a smooth family of oscillators $\varepsilon \rightarrow (P , \mathcal{F}_{\varepsilon})$, $0 \leq \varepsilon \leq 1$ such that the leaves of $\mathcal{F}_0$ are the fiber of a circle bundle $\xi_0$ and the leaves of $\mathcal{F}_1$ are the fiber of a circle bundle $\xi_1$, and these two bundles are not isomorphic, in fact they have not homotopic base spaces.

The building blocks of Thurston's example are two circle bundles over $T^2$
\[
\begin{cases}
\eta : & S^1 \hookrightarrow S(T^2) \rightarrow^{\pi_{\eta}} T^2 \\
\mu : & S^1 \hookrightarrow H \rightarrow^{\pi_{\mu}} T^2
\end{cases}
\] 
where $\eta$ is the (trivial) unit tangent bundle of the flat torus, while
\begin{equation}\label{heisenberg}
H=\frac{H_3(\mathbb{R})}{H_3(\mathbb{Z})}
\end{equation}
where $H_3(\mathbb{R})$, $H_3(\mathbb{Z})$ are respectively the $3$-dimensional Heisenberg group over the real and integer numbers. We denote $X_{\eta}$, respectively $X_{\mu}$, the isochronous infinitesimal generators of $\eta , \mu$.

These two bundles are coupled togheter through the {\it fiber product} principal bundle
$$
\eta \times_{T^2} \mu : T^2 \hookrightarrow P= S(T^2) \times_{T^2} H \rightarrow^{\pi} T^2
$$
where
$$
P= \{ (p,p') \in S(T^2) \times H : \pi_{\eta}(p)=\pi_{\mu}(p')=\pi (p,p') \}.
$$
This embedding of $P$ in $S(T^2) \times H $ implies that
$$
T_{(p,p')} P < T_{(p,p')} (S(T^2) \times H) \simeq T_p S(T^2) \times T_{p'} H
$$
therefore a vector field
$$
X: P \rightarrow TP
$$
can be written, with slight abuse of notation, as
\begin{equation}\label{splitting}
X(p,p') = (S(p,p'),T(p,p'))
\end{equation}
where $\pi_{\eta}(p)=\pi_{\mu}(p')$ and $S(p,p') \in T_p S(T^2)$, $T(p,p') \in T_{p'} H$. The description of our modification of Thurston's example is completed by the introduction of two {\it marginal bundles} of $\eta \times_{T^2} \mu$ defined by
\[
\begin{cases}
\xi_1 & : S^1 \hookrightarrow P \rightarrow^{\pi_1} H \\
\xi_2 & : S^1 \hookrightarrow P \rightarrow^{\pi_1} S(T^2)
\end{cases}
\]
generated by the $S^1$-actions
\[
\begin{cases}
(g_1 ,1)*_{\xi_1} (p,p') &= (g_1 *_{\eta} p ,p') \\
(1 , g_2) *_{\xi_2} (p,p')&=(p,g_2 *_{\mu} p')
\end{cases}
\]
$g_1 , g_2 \in S^1$. Adopting notation (\ref{splitting}) the isochronous infinitesimal generators $Y_1 , Y_2$ of $\xi_1 , \xi_2$ are $Y_1 = (X_{\eta},0)$, $Y_2 = (0 , X_{\mu})$.

Our goal will be to define a real analytic family of foliation by circles of $P$, parametrized by $\lambda$, defined by the integral curves of the vector fields
$$
X_{\lambda} = (\alpha_1 (\lambda) S_{\lambda} ,\alpha_2 (\lambda) T_{\lambda}) : P \rightarrow TP
$$
$\lambda \in [0,\infty]$, $\alpha_1 (\lambda), \alpha_2 (\lambda) >0$, such that $X_0=Y_1$,  $X_{\infty}=Y_2$, hence showing that the hypothesis on the dimension of the total space $P$ is esssential in Theorem (\ref{main}), and moreover question $(I)$ at the end of the introduction has answer in the affirmative.

Let $\tilde S_{\lambda} : \mathbb{C} \times S^1 \rightarrow T(\mathbb{C} \times S^1)$
\[
\tilde S_{\lambda} :
\begin{cases}
\dot z &= \zeta \\
\dot \zeta &= \frac{i}{\lambda} \zeta
\end{cases}
\]
then $\vert \zeta (t) \vert \equiv \vert \zeta(0) \vert$. Therefore defining
$$
S_{\lambda}(z,\zeta) = \tilde S_{\lambda}(z,\zeta)
$$
we get a real analytic vector field
$$
S_{\lambda}(z,\zeta) : S(T^2) \rightarrow T S(T^2)
$$
where $T^2 = \frac{\mathbb{C}}{2 \pi \mathbb{Z} \times 2 \pi \mathbb{Z}}$. For any $\lambda \in ]0, \infty[$  the integral curves of $S_{\lambda}$ define a foliation by circles $\gamma = \gamma (z_0, \zeta_0, \lambda)$, $z_0$ center, $\lambda$ radius, $\zeta_0$ initial velocity, determining a circle of the foliation. When $\lambda = \infty$ the leaves of the foliation of $S(T^2)$ are circles if and only if $\frac{\Im(\zeta_0)}{\Re(\zeta_0)} \in \mathbb{Q}$. The $\pi_{\eta}$-projection of the curves $\gamma$ are the {\it drift curves} of the sought dynamics on $P$. The {\it synchronization argument} by Thurston, provides a way to lift the drift curves to closed curves $\Gamma \subset P$ defining for any $\lambda \in ]0, \infty]$ a foliation by circles $\mathcal{F}_{\lambda}$  of $P$. We slightly modify this construction to get a $1$-parameter family of foliations $\mathcal{F}_{\lambda}$ which coincides for $\lambda =0$ with the fibers of $\xi_1$, respectively for $\lambda =\infty$ with the fibers of $\xi_2$.

Let $U \times S^1$ be a trivializing solid torus for $\mu$, let $x_U = x_U \, mod \, 2 \pi$ be a local coordinate along $S^1$. Adopting the normalization condition $\varphi (X_{\mu})\equiv -1$, {\it cfr.} \cite{chern}, a connection $1$-form on $\mu$ in local coordinates is
$$
\varphi = - d x_U + {\pi_{\mu}}^* \theta_U. 
$$
We will define $\Gamma$ such that
\begin{equation}\label{solleva}
\pi_{\mu} : \Gamma \rightarrow \gamma
\end{equation}
and it is a closed integral curve of the partially (un)coupled vector field $X_{\lambda} = (\alpha_1 (\lambda) S_{\lambda},\alpha_2 (\lambda) T_{\lambda})$ where $S_{\lambda}$ is the previously defined true vector field on $S(T^2)$ and $\alpha_1 , \alpha_2$ are suitably defined positive functions. The definition of $T_{\lambda} : P \rightarrow TH$ goes as follows. We recall, {\it cfr.} \cite{chern}, that the {\it phase} of the lift (\ref{solleva}) is
$$
\Delta x (\gamma) = \int_{int \, \gamma} \Theta - \int_{\Gamma} \varphi
$$
where $\Theta$ is the curvature $2$-form of the connection $\varphi$ and $int \, \gamma$ is the interior of $\gamma \subset T^2$. It is customary to name
$$
\int_{int \, \gamma} \Theta = geometric \, \, phase
$$
$$
\int_{\Gamma} \varphi = dynamical \,  \,phase .
$$
The condition that $\Gamma$ is a closed curve becomes $\Delta x (\gamma) = 2 \pi k$, $k \in \mathbb{Z}$:  we choose $k=0$ as Thurston did \cite{sullivan}, getting $\Delta x (\gamma) =0$ hence
$$
\int_{int \, \gamma} \Theta = \int_{\Gamma} \varphi.
$$
For the connection $1$-form $\varphi$ we again follow Thurston's choice \cite{sullivan}, defining in $U=\frac{\mathbb{C}}{2 \pi \mathbb{Z} \times 2 \pi \mathbb{Z}}$
$$
\varphi = -d x_U + {\pi_{\mu}}^* (x dy).
$$
Hence
$$
\int_{int \, \gamma} \Theta = \pi \lambda^2
$$
and equality of geometric and dynamical phases implies
\begin{equation}\label{fasi}
\int_{\Gamma} \varphi = \pi \lambda^2 .
\end{equation}
Putting
\begin{equation}\label{thurston}
X_{\lambda} = (\alpha_1 (\lambda) S_{\lambda} , \alpha_2 (\lambda) X_{\mu})
\end{equation}
where $\alpha_1 , \alpha_2 : [0,\infty] \rightarrow \mathbb{R}^+$ are smooth functions, from (\ref{fasi})
$$
\pi \lambda^2 = \int_0^{\frac{2 \pi \lambda}{\alpha_1 (\lambda)}} \alpha_2 (\lambda) \, dt = 2 \pi \lambda \frac{\alpha_2}{\alpha_1}
$$
we get
\begin{equation}\label{blocco}
\frac{\alpha_2 (\lambda)}{\alpha_1 (\lambda)} = \frac{\lambda}{2} .
\end{equation}
For any $\lambda \in ]0,\infty[$, (\ref{thurston}) satisfying (\ref{blocco}) defines a foliation by circles. Thurston's original choice: $\alpha_2 (\lambda) = \frac{\lambda}{2}$, $\alpha_1 (\lambda) \equiv 1$ gives for $\lambda = \infty$ the foliation by circles defined by the fibers of $\xi_2$, if $X_{\lambda}$ is reparametrized by multiplication for $\frac{2}{\lambda}$. We modify this choice, still keeping (\ref{blocco}), defining
\[
\begin{cases}
\alpha_2 (\lambda) &= \frac{\lambda}{2 + \lambda} \\
\alpha_1 (\lambda) &= \frac{2}{2 + \lambda}
\end{cases}
\]
therefore letting $\lambda \rightarrow 0^+$ we get in (\ref{thurston})
$$
X_0 = (X_{\eta} , 0) =Y_1
$$
while letting $\lambda \rightarrow \infty$ we obtain
$$
X_{\infty} = (0 , X_{\mu})=Y_2
$$
hence proving that $\lambda \rightarrow X_{\lambda}$, $\lambda \in [0,\infty]$ define a $1$-parameter family of foliations by circles of $P$ interpolating those two generated by the fibers of the marginal bundles $\xi_1 , \xi_2$. These two bundles cannot be isomorphic, and their base spaces are not homotopic, hence giving the sought example showing that if the hypothesis that $dim P=3$ is dropped Theorem (\ref{main}) is false and moreover providing an example of a smooth $1$-parameter family of oscillators on a a $4$-dimensional manifolds connecting two oscillators with not homeomorphic base spaces.

In fact, is sufficient to observe that $H_1 (S(T^2) , \mathbb{Z})) = \mathbb{Z}^3$, while from (\ref{heisenberg})
\[
H_1(H,\mathbb{Z}) \simeq H_3 (\mathbb{Z})^{abel} = \mathbb{Z}^2
\] 
where $H_3 (\mathbb{Z})^{abel}$ is the abelianization of $H_3 (\mathbb{Z})$ and its computation follows easily from generators and relations of this group.

If needed, this example can be projected, as in \cite{sullivan}, to a real analytic vector field
\[
\begin{cases}
X &:P \times S^1 \rightarrow T(P \times S^1) \\
X((p,p'), \theta) &= X_{\lambda (\theta)} (p,p')
\end{cases}
\]
where for instance
$$
\lambda (\theta) = \cot \theta .
$$
Each
$$
X_{\lambda (\theta)} : P \times \{ \theta \} \rightarrow TP
$$
foliates $P$ by circles, and for $\theta = \frac{\pi}{2}$, respectively for $\theta = \pi$, the foliation is generated by the fibers of $\xi_1$, respectively $\xi_2$. This is the sought example showing that the dimensionality assumption in Theorem (\ref{main}) is necessary.


\begin{thebibliography}{99}
\bibitem{bc} Bruhat F. and Cartan H. {\it Sur la structure des sous-ensembles anlytiques reels}, C.R.Acad.Sci. Paris Ser. I Math. {\bf 244} (1957), 988-996.
\bibitem{besse} Besse A. {\it Manifolds all of whose Geodesics are Closed}, Ergebnisse der Mathematik und ihrer Grenzgebiete, vol. 93, Springer Verlag, Berlin (1978)
\bibitem{bott&tu} Bott R. and Tu L. {\it Differential forms in algebraic topology}, GTM Springer Verlag, Berlin (1982).
\bibitem{chern}Chern S-s {\it Circle bundles} in: Palis, J. and do Carmo, M. (eds.) {\it Geometry and Topology}, LNM 597 Springer Verlag, (1977), 114-131.
\bibitem{constantin} Constantin M. , {\it The Euler number of unit tangent bundles of surfaces}, A Thesis Presentented to the Faculty of the USC Graduate School, University of Southern California, for the degree of Master of Arts (Mathematics), avaliable electronically at University of Southern California Digital Library (2011).
\bibitem{dumortier} Dumortier F. {\it Local study of planar vector fields} in: Broer, H.W. {\it et al.} {\it Structures in dynamics. Finite detrministic studies}, North-Holland, Elsevier Science Publishers, Amsterdam (1991).  
\bibitem{epstein} Epstein D.B.A. {\it Periodic flows on three-manifolds}, Ann. of Math. (2) {\bf 95} (1972), 66-82.
\bibitem{godbillon} Godbillon C. {\it Feuilletages: Etude Geometriques}, Progress in Mathematics, Vol. 98, Birkhauser, Boston (1991).
\bibitem{griffiths-harris} Griffiths P. and Harris J. {\it Principles of algebraic geometry}, Wiley, New York (1978).
\bibitem{kn} Kobayashi S. and Nomizu K. {\it Foundation of Differential Geometry}, vol.1, Interscience, New York, 1963.
\bibitem{kosinski} Kosinski A. {\it Differential Manifolds}, Pure and Applied Mathematics Vol. 138, Academic Press, San Diego CA (1993).
\bibitem{montesinos} Montesinos J.M. {\it Classical tessellations and three-manifolds}, Univeritext, Springer Verlag, Berlin (1987).
\bibitem{milnor} Milnor J. {\it Topology from the differentiable point of view}, Princeton Landmarks in Mathematics, Princeton University Press, Princeton New Jersey (1997)
\bibitem{montgomery} Montgomery D. and Zippin L. {\it Topological transformation groups}, Interscience, New York (1955).
\bibitem{scott} Scott P. {\it The geometry of $3$-manifolds}, Bull. London Math. Soc. {\bf 15}, 401-487 (1983).
\bibitem{seifert} Seifert H. {\it Closed integral curves in $3$-space and isotopic two dimensional deformations}, Proc. A.M.S. 1 (1950), 287-302.
\bibitem{sullivan} Sullivan D. {\it A counterexample to the periodic orbit conjecture}, Publ. Math. IHES {\bf 46} (1976), 5-14.
\bibitem{takens} Takens F. {\it Singularities of vector fields}, Publ. Math. IHES {\bf 43} (1974), 48-100.





\end{thebibliography}
\end{document}